\newtheorem{theorem}{Theorem}[section]
\newtheorem{corollary}[theorem]{Corollary}%[section]
\newtheorem{lemma}[theorem]{Lemma}%[section]
\newtheorem{proposition}[theorem]{Proposition}%[section]
\newtheoremstyle{likedef}
  {}%
  {}%
  {}%
  {\parindent}%
  {\bfseries}%
  {.}%
  {.5em}%
  {}%
\theoremstyle{likedef}
\newtheorem{definition}[theorem]{Definition}%[section]
\newtheorem{claim}[theorem]{Claim}
\numberwithin{equation}{section}
\begin{document}
\title{A short proof of the phase transition for \\ the vacant set of random interlacements}

\author{
Bal\'azs R\'ath\thanks{
Budapest University of Technology, Institute of Mathematics,
MTA-BME Stochastics Research Group, 1 Egry J\'ozsef u., 1111 Budapest, Hungary.
\noindent e-mail: \texttt{rathb@math.bme.hu}
}
}

\maketitle

\footnotetext{MSC2000: Primary 60K35, 82B43.}
\footnotetext{Keywords: Percolation, Random Interlacements.}

\begin{abstract}
The vacant set of random interlacements at level $u>0$, introduced in \cite{SznitmanAM}, is a 
percolation model on $\mathbb{Z}^d$, $d \geq 3$ which arises as the set of sites avoided by a Poissonian cloud of 
doubly infinite trajectories, where $u$ is a parameter controlling the density of the cloud. 
It was proved in \cite{ SidoraviciusSznitman_RI, SznitmanAM} that for any $d \geq 3$
 there exists a positive and finite threshold $u_*$ such that if $u<u_*$ then the vacant set percolates and if
 $u>u_*$ then the vacant set does not percolate.
  We give an elementary proof of these facts. Our method also gives simple upper and lower bounds on the value of $u_*$
   for any $d \geq 3$.

\end{abstract}

\section{Introduction}

The model of random interlacements was introduced in \cite{SznitmanAM}.  
The interlacement $\mathcal{I}^u$ at level $u>0$  is a random subset of $\mathbb{Z}^d$, $d\geq 3$ 
that arises as the local  limit  as $N \to \infty$ of the range of the first $\lfloor u  N^d \rfloor$ steps of a simple random walk on
the discrete torus $(\mathbb{Z}/N\mathbb{Z})^d$, $d \geq 3$, see \cite{windisch_torus}.  
The law of $\mathcal I^u$ is characterized by 
\begin{equation}\label{def_eq:Iu_capa}
\mathbb P[\mathcal I^u\cap K = \emptyset] = e^{-u\cdot \mathrm{cap}(K)}, \quad\text{for any finite $K\subseteq\mathbb{Z}^d$,}
\end{equation}
where $\mathrm{cap}(K)$ denotes the discrete capacity of $K$, see \eqref{def_eq_capacity}.
The vacant set of random interlacements $\mathcal V^u$ at level $u$ is defined as the complement of $\mathcal I^u$ at level $u$:
\begin{equation}\label{def:vsri}
\mathcal V^u = \mathbb{Z}^d \setminus \mathcal I^u,\quad u>0 .\
\end{equation}
By \cite[(1.68)]{SznitmanAM} the correlations of $\mathcal V^u$ decay polynomially for any $u>0$:
\begin{equation}\label{eq:ri:correlations}
 \mathbb P[x,y\in\mathcal V^u] -
 \mathbb P[x\in\mathcal V^u]\cdot \mathbb P[y\in\mathcal V^u] \asymp 
 (|x-y| \vee 1 )^{2-d}~,\qquad x,y\in\mathbb{Z}^d.
\end{equation}
One is interested in the connectivity properties of the subgraphs of the nearest-neighbour lattice $\mathbb{Z}^d$ 
spanned by the above random sets. For any $u>0$, $\mathcal I^u$ is a $\mathbb{P}$-a.s.\  connected random subset of $\mathbb{Z}^d$ (see \cite[(2.21)]{SznitmanAM}),
but $\mathcal V^u$ exhibits a percolation phase transition:
there exists $u_*\in(0,\infty)$ such that 
\begin{itemize}\itemsep0pt
\item[(i)] 
for any $u>u_*$, $\mathbb{P}$-a.s.\ all connected components of $\mathcal V^u$  are finite, and 
\item[(ii)]
for any $u<u_*$, $\mathbb{P}$-a.s.\ $\mathcal V^u$ contains an infinite connected component. 
\end{itemize}
The fact that $u_*<\infty$ was proved in \cite[Section 3]{SznitmanAM}, and the positivity of $u_*$ was established in \cite[Section 4]{SznitmanAM} when $d\geq 7$, 
and later in \cite{SidoraviciusSznitman_RI} for all $d\geq 3$. 

There is no reason to believe that an  exact formula for the value of the critical threshold $u_*=u_*(d)$ exists.
 However, it is proved in \cite{Sznitman_lower, Sznitman_upper} that
 \begin{equation}\label{u_star_high_dim_sznitman}
 \lim_{d \to \infty} \frac{u_*(d)}{ \ln(d)}=1,
 \end{equation}
  in agreement with the principal asymptotic behaviour of the critical threshold of random interlacements on $2d$-regular trees, which is explicitly computed in \cite[Proposition 5.2]{teixeira_EJP_weighted}.

\medskip 
 
% The existing proofs of $0<u_*$ and $u_*<+\infty$ on $\mathbb{Z}^d$ are rather involved, 

% quantitative estimates could be obtained from existing proofs
% by keeping track of the constants everywhere.
 
 % and it seems hard to extract 
% quantitative information about the value of $u_*$ in low dimensions from them.

The aim of this paper is to give a short proof of the non-triviality of phase transition of $\mathcal{V}^u$
 and to provide
simple explicit upper and lower bounds on the value of $u_*=u_*(d), d \geq 3$.

\medskip

 For any $d \geq 3$ let us denote by $0<c_g=c_g(d)$ and $C_g=C_g(d) < +\infty $ the best constants such that the inequalities
\begin{equation}\label{green_bounds}
c_g \cdot (\vert x-y \vert \vee 1)^{2-d} \leq  g(x,y) \leq C_g \cdot (\vert x-y \vert \vee 1)^{2-d}, \qquad x,y \in \mathbb{Z}^d
\end{equation}
hold, where $| \cdot |$ is the $\ell^\infty$-norm  on $\mathbb{Z}^d$ and
 $g(\cdot,\cdot)$ is the Green function of simple random walk on $\mathbb{Z}^d$, see \eqref{def_eq_green}.
The positivity of $c_g$ and $C_g<+\infty$  follow from  \cite[Theorem 1.5.4]{L91}.

\begin{theorem}\label{thm_bounds_on_u_star}
 For any $d \geq 3$, we have
\begin{equation}\label{eq_thm_bounds_on_u_star}
   \frac{c_g}{L_0}  \frac{1}{\mathcal{C}_2} 2^{-(d+5)}  \leq u_* \leq \frac{5}{2} C_g \ln(\mathcal{C}_d), 
   \end{equation}
where 
\begin{equation}\label{def_eq_C_d}
 \mathcal{C}_d=(13^d-11^d)(25^d-23^d), \quad d \geq 2,
 \end{equation}
and 
 \begin{equation}\label{def_eq_L_0_supercrit}
L_0= \left\{
\begin{array}{ll}
 \left\lceil \exp\left( 48 \frac{C_g}{c_g} \mathcal{C}_2 \right) \right\rceil
& \text{ if } \quad d=3,\\
  & \\
 \left\lceil \left( 48 \frac{C_g}{c_g} \mathcal{C}_2 \right)^{\frac{1}{d-3}} \right\rceil 
& \text{ if } \quad d \geq 4. 
\end{array} \right.
\end{equation}
  \end{theorem}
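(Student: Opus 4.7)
My plan is to prove the two inequalities in \eqref{eq_thm_bounds_on_u_star} by separate arguments. The upper bound is an energy-versus-entropy first-moment bound on a coarse-grained vacant skeleton, using \eqref{def_eq:Iu_capa} and the upper Green-function estimate in \eqref{green_bounds}. The lower bound is a renormalization and Peierls-type argument at a dimension-sensitive base scale $L_0$, using the polynomial correlation decay \eqref{eq:ri:correlations} together with the lower Green-function estimate in \eqref{green_bounds}.

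For the upper bound $u_*\le \tfrac{5}{2} C_g\ln\mathcal C_d$, I would encode an infinite vacant self-avoiding path $\pi$ starting at $0$ by its scale-$12$ skeleton: set $y_0=0$, let $y_{i+1}$ be the first vertex of $\pi$ satisfying $|y_{i+1}-y_i|_\infty=12$ (the first exit of $B_{12}(y_i)$), and let $z_i$ be any intermediate vertex on $\pi$ with $|z_i-y_i|_\infty=6$. The displacement pair $(z_i-y_i,\,y_{i+1}-y_i)$ takes values in $\partial B_6(0)\times\partial B_{12}(0)$, of cardinality $(13^d-11^d)(25^d-23^d)=\mathcal C_d$, so the number of skeletons of coarse length $k$ starting at $0$ is at most $\mathcal C_d^{\,k}$. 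Since $y_i$ lies on the $\ell^\infty$-sphere of radius $12i$ about $0$, one has $|y_i-y_j|_\infty\ge 12|i-j|$ and analogous separation estimates for the $z_i$'s. Combining the variational bound $\mathrm{cap}(K)\ge|K|^2/\sum_{x,y\in K}g(x,y)$ with the upper estimate in \eqref{green_bounds} then yields a linear lower bound of the form $\mathrm{cap}(K)\ge \tfrac{2k}{5 C_g}$ on the skeleton $K=\{y_0,z_0,\dots,y_k\}$, the factor $5/2$ absorbing both the diagonal $g(0,0)$ terms and the geometric series of off-diagonal contributions. A union bound via \eqref{def_eq:Iu_capa} then gives
\[
\mathbb P\bigl[0\leftrightarrow\partial B_{12k}(0)\text{ in }\mathcal V^u\bigr]\ \le\ \bigl(\mathcal C_d\,e^{-2u/(5C_g)}\bigr)^{k},
\]
which vanishes as $k\to\infty$ as soon as $u>\tfrac{5}{2} C_g\ln\mathcal C_d$, giving the upper bound.

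For the lower bound I would set up a static renormalization on the coarse lattice $L_0\mathbb Z^d$ with $L_0$ as in \eqref{def_eq_L_0_supercrit}: declare a coarse site $z$ \emph{good} if $\mathcal V^u$ contains a designated crossing path of length at most $CL_0$ inside $B_{L_0}(z)$, with the crossings chosen compatibly so that adjacent good coarse sites yield a vacant connection in $\mathcal V^u$. By \eqref{def_eq:Iu_capa} and the capacity upper bound at most $L_0/c_g$ for such a crossing (which follows from the lower estimate in \eqref{green_bounds}), a single box is bad with probability at most $uL_0/c_g$ when $u$ is small. The goodness field is long-range dependent, but \eqref{eq:ri:correlations} yields a correspondingly small covariance between distant coarse boxes. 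I would then run a Peierls estimate on bad components: the dimension-free lattice-animal count $\mathcal C_2^n$ bounds the number of $*$-connected bad components of size $n$ surrounding the origin, while the joint probability of such a component is controlled through \eqref{eq:ri:correlations} and \eqref{green_bounds}. The scale \eqref{def_eq_L_0_supercrit} is calibrated precisely to make the resulting Peierls sum geometric with ratio below $1$ whenever $u<(c_g/L_0)\mathcal C_2^{-1}2^{-(d+5)}$; absence of an infinite bad contour around $0$ then produces an infinite good cluster and hence an infinite vacant cluster. The dichotomy in \eqref{def_eq_L_0_supercrit} between the exponentially large $L_0$ in $d=3$ and the polynomial $L_0$ for $d\ge 4$ reflects the logarithmic versus power-law behaviour of Green-function sums over boxes in those dimensions.

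The main obstacle is the lower bound: the positive long-range correlations of $\mathcal V^u$ prevent any direct stochastic domination by independent Bernoulli site percolation on $\mathbb Z^d$, so the renormalization has to be performed at a coarse scale $L_0$ large enough to dissolve the dependence through the covariance bound afforded by \eqref{eq:ri:correlations}. Calibrating $L_0$ so that the decorrelation rate beats the $\mathcal C_2^n$ lattice-animal entropy in the Peierls sum, and tracking constants to obtain the explicit dimension-dependent right-hand side of \eqref{eq_thm_bounds_on_u_star}, is the technical heart of the argument. The upper bound, by contrast, is a fairly mechanical entropy-versus-capacity balance whose only delicate point is verifying the linear capacity lower bound on the skeleton.
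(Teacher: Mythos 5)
Your upper bound is conceptually the same energy-versus-entropy balance as the paper's and even lands on exactly the same entropy factor $\mathcal{C}_d=|S(0,6)|\cdot|S(0,12)|$ and the same final threshold $\tfrac{5}{2}C_g\ln\mathcal{C}_d$, but your choice of a \emph{single-scale linear skeleton} introduces a genuine gap that kills the argument precisely in $d=3$. Two problems. First, the separation claim $|y_i-y_j|\ge 12|i-j|$ is simply false: $y_{i+1}$ is the first exit of $B_{12}(y_i)$, so $|y_{i+1}-y_i|=12$, but nothing forces $y_i$ onto $S(0,12i)$ --- the skeleton can backtrack, and $|y_i-y_{i+2}|$ can be as small as $1$. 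Second, and more fundamentally, even if you enforced $|y_i-y_j|\gtrsim|i-j|$ by passing to a subsequence, the Green-function row sum over a linear array of $k$ points at fixed spacing behaves like $\sum_{l=1}^{k}l^{2-d}$, which in $d=3$ diverges like $\log k$. Via \eqref{bound_on_capacity} this gives $\mathrm{cap}(K)\gtrsim k/\log k$, \emph{not} the linear bound $\tfrac{2k}{5C_g}$ you need, and the union bound no longer closes for any finite $u$. The paper avoids exactly this by replacing the linear skeleton with a dyadic tree spread out on all scales (Definition \ref{def_proper_embedding_of_trees}, Lemma \ref{lemma_far_in_tree_far_in_embedding}): among the $2^n$ leaves, the $2^{k-1}$ leaves at tree-distance $k$ from a fixed leaf are at Euclidean distance at least $L_{k-1}=6^{k-1}$ (Lemma \ref{lemma_far_in_tree_far_in_embedding}), so the Green-function row sum becomes $\sum_k 2^{k-1}\cdot 6^{(k-1)(2-d)}=\sum_k 3^{-(k-1)}<\infty$ uniformly in $n$ even for $d=3$ (see \eqref{subcrit_upper_bound_on_green_sum}). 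The multi-scale tree is not a cosmetic choice; it is what makes the constant $\tfrac52$ honest in dimension three.

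Your lower bound takes a genuinely different route from the paper's and, as described, does not go through. You propose a coarse-grained Peierls estimate in which the dependence between distant coarse boxes is controlled by the two-point covariance bound \eqref{eq:ri:correlations}. But polynomial pairwise covariance decay does not bound the \emph{joint} probability that $n$ boxes along a contour are simultaneously bad; it only controls second moments. This is precisely the obstruction that forced the original arguments to introduce sprinkling and decoupling inequalities, which your proposal avoids without offering a substitute. The paper's way around it is structural rather than analytic: restrict to the plane $F$, use planar duality to reduce to the event that a $*$-connected path in $\mathcal{I}^u\cap F$ crosses an annulus, funnel that crossing through $2^n$ dyadically spread-out frames $\Box_{\mathcal{T}(m)}$ via Lemma \ref{lemma_path_leaves_spheres_intersect}, and then invoke the Poissonian representation of $\mathcal{I}^u$ from Claim \ref{claim:constructive_interlacement}. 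The key estimate \eqref{walk_escapes_p} shows that a single random walk started on one frame visits the others with probability at most $p$, so the number of frames it hits is stochastically dominated by a Geometric$(1-p)$ variable; summing over the Poisson cloud gives an explicit exponential moment and hence a Chernoff bound that beats $\mathcal{C}_2^{2^n}$ once $L_0$ is chosen as in \eqref{def_eq_L_0_supercrit}. This large-deviation bound on the Poissonian occupation is the ingredient your proposal is missing, and it is also what your $d=3$ versus $d\ge4$ dichotomy for $L_0$ is actually calibrated for: it makes the frame-escape probability $p$ in \eqref{def_of_p_geo_paramater} small, using the $\ln L_0$ gain in the $d=3$ frame-capacity bound of Lemma \ref{lemma_frame_capacity}.
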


The bounds \eqref{eq_thm_bounds_on_u_star} are not at all sharp, especially if we compare them with \eqref{u_star_high_dim_sznitman} as $d \to \infty$. 
This shortcoming of Theorem \ref{thm_bounds_on_u_star} is counterbalanced by the fact that its proof is 
very simple. In particular, our self-contained proof does not use the
 \emph{``sprinkling'' technique} and \emph{decoupling inequalities}  usually applied in order to overcome the 
 long-range correlations \eqref{eq:ri:correlations} present in the model. 
 The proof of $u_*(d)>0$ for $d \geq 7$ in \cite[Section 4]{SznitmanAM} does not use ``sprinkling'', but the 
 proof of $u_*(d)<+\infty$ for any $d \geq 3$ in \cite[Section 3]{SznitmanAM} and the proof of $u_*(d)>0$ for $3 \leq d \leq 7$  
 in \cite{SidoraviciusSznitman_RI} does. Various forms of 
 decoupling inequalities have been subsequently developed to study the connectivity properties of $\mathcal{V}^u$ in 
 the subcritical \cite{PoTe, SidSzn_aihp, Sznitman:Decoupling} 
 and supercritical \cite{ DRS,  Teixeira} phases. 
 These  techniques are very useful once they are available, but the elementary method of our paper
 seems to be easier to adapt to other percolation models with long-range correlations, 
 e.g., \emph{branching interlacements} \cite{branching}.

\medskip

Let us briefly describe the idea of the proof of Theorem \ref{thm_bounds_on_u_star}. 
We employ multi-scale renormalization.
In order to prove $u_*<+\infty$ we show that if $\mathcal{V}^u$ crosses an annulus at scale $L_n=6^n$ 
then this vacant crossing contains a set $\mathcal{X}_{\mathcal{T}}$ of $2^n$ well-separated  vertices which arises as the image of leaves
 under an 
 embedding $\mathcal{T}$  of the dyadic tree  of depth $n$ 
 (this method already appears in \cite{Sznitman:Decoupling}). 
 By construction, the number of possible embeddings is less than
 $\mathcal{C}_d^{2^n}$ (c.f.\ \eqref{def_eq_C_d}), so we only need to show that $\mathrm{cap}(\mathcal{X}_{\mathcal{T}}) \asymp 2^n$ 
 if we want to use \eqref{def_eq:Iu_capa} to to show that crossing of the annulus by $\mathcal{V}^u$ is unlikely when $u$ is big enough. 
 This is indeed the case, because
  by construction the embedding $\mathcal{T}$ is ``spread-out on all scales", 
  thus the cardinality and the capacity of $\mathcal{X}_{\mathcal{T}}$ are comparable.
  
  In order to prove $u_*>0$, we restrict our attention to a plane inside $\mathbb{Z}^d$. By planar duality we only
  need to show that a $*$-connected crossing of a planar annulus at scale $L_n=L_0 \cdot 6^n$ 
  by $\mathcal{I}^u$ is unlikely.
  We show that such a crossing must intersect $2^n$ ``frames'', where each frame is the union of four ``sticks'' of length 
  $2 L_0-1$. Such a collection of frames again arises from a spread-out embedding of the  dyadic tree  of depth $n$.
We use that $\mathcal{I}^u$ can be written as the union of the ranges of a Poissonian cloud of independent
 random walks and the fact that random walks tend to avoid sticks if $L_0$ is large enough 
 (c.f.\ \eqref{def_eq_L_0_supercrit})  to arrive at a large deviation
 estimate on the probability that the number of frames that intersect 
$\mathcal{I}^u$ is  $2^n$ which is strong enough to beat the combinatorial complexity term $\mathcal{C}_2^{2^n}$. 
This stick-based approach to $u_*>0$ is already present in \cite[Section 3]{SidoraviciusSznitman_RI} and our
 large deviation estimate resembles the one in the proof of \cite[Theorem 2.4]{SznitmanAM}.

\medskip

The rest of this paper is organized as follows.

In Section \ref{section:preliminaries} we introduce further notation and
recall some useful facts related to the notion of capacity and  random interlacements.
In Section \ref{section:renormalization} we define the notion of a \emph{proper embedding} 
of a dyadic tree into $\mathbb{Z}^d$ and derive some facts about such embeddings. 
In Sections \ref{section:upper} and \ref{section:lower} we prove the upper and lower bounds on $u_*$ stated in Theorem \ref{thm_bounds_on_u_star}.

\section{Preliminaries} \label{section:preliminaries}
For a set $K$, we denote by $\vert K\vert$ its cardinality. 
We denote by $K \subset \subset \mathbb{Z}^d$ the fact that $K$ is a finite subset of $\mathbb{Z}^d$.
 We denote by $| x |$ the $\ell^\infty$-norm of $x \in \mathbb{Z}^d$ and by  $S(x,R)$ the $\ell^\infty$-sphere of radius $R$ about $x$ in $\mathbb{Z}^d$:
\begin{equation} \label{sphere}
 S(x,R)=\{ y \in \mathbb{Z}^d\, : \, |y-x| =R \}.
\end{equation}

  For $x\in\mathbb{Z}^d$, denote by $P_x$ the law of simple random walk 
  $\left(X_n\right)_{n=0}^{\infty}$ on $\mathbb{Z}^d$  starting at $X_0=x$.
  If $m$ is a probability measure on $\mathbb{Z}^d$, we denote by
  \begin{equation}\label{rw_with_initial_dist_m}
   P_m= \sum_{x \in \mathbb{Z}^d} m(x)P_x
   \end{equation}
   the law of simple random walk with initial distribution $m$ and by $E_m$ the corresponding expectation.
    The Green function  of simple random walk on $\mathbb{Z}^d$ is defined by
\begin{equation}\label{def_eq_green}
g(x,y) = \sum_{n = 0}^{\infty} P_x [X_n = y], \quad x, y \in \mathbb{Z}^d.
\end{equation}
Let us denote by $\{X\} \subseteq \mathbb{Z}^d$ the range of the random walk:
\begin{equation}\label{def_eq_range}
 \{X\} = \cup_{n=0}^{\infty} \{ X_n \}
 \end{equation}

\subsection{Potential theory}\label{subsection_potential_theory}

If $K \subset \subset \mathbb{Z}^d$,  we define the equilibrium measure $e_K(\cdot)$ of $K$ by  
\begin{equation*}%\label{def_eq_equilib_measure}
e_K(x) = P_x [\, X_n \notin K \text{ for any } n \geq 1 \, ], \qquad x \in K.
\end{equation*}

The total mass of the equilibrium measure is called the capacity of $K$:
\begin{equation}\label{def_eq_capacity}
\mathrm{cap}(K) = \sum_{x \in K} e_K(x).
\end{equation}

One defines the normalized equilibrium measure $\widetilde{e}_K(\cdot)$ of $K$ by
\begin{equation}\label{def_eq_normalized_eq_measure}
\widetilde{e}_K(x) =  \frac{e_K(x)}{\mathrm{cap}(K)}.
\end{equation}

Let us now collect some facts about capacity that we will use in the sequel. The proofs of the properties
\eqref{green_equilib_entrance_identity}-\eqref{bound_on_capacity} below can be found in, e.g., \cite[Section 1.3]{DRS_book}.

For any $x\in\mathbb{Z}^d$ and any $K \subset \subset \mathbb{Z}^d$, 
\begin{equation}\label{green_equilib_entrance_identity}
 P_x [ \{X\} \cap K \neq \emptyset ]  = 
\sum_{y \in K} g(x,y) e_K(y) \stackrel{ \eqref{def_eq_capacity} }{\leq} \mathrm{cap}(K) \max_{y \in K} g(x,y) .
\end{equation}

For any $K_1,K_2 \subset \subset \mathbb{Z}^d$,
\begin{equation}\label{subadditive}
\mathrm{cap}(K_1 \cup K_2) \leq \mathrm{cap}(K_1)+ \mathrm{cap}(K_2).
\end{equation}

For any $K \subseteq K' \subset \subset \mathbb{Z}^d$,
\begin{equation}\label{capa_monotone}
\mathrm{cap}(K) \leq \mathrm{cap}(K').
\end{equation} 

For any $K \subset \subset \mathbb{Z}^d$,
\begin{equation}\label{bound_on_capacity}
 \frac{|K|}{\max_{x \in K} \sum_{y \in K}{g(x,y)}}
\leq \mathrm{cap}(K) \leq \frac{|K|}{\min_{x \in K} \sum_{y \in K}{g(x,y)}}.
\end{equation}

Let us denote by $F$ the plane 
\begin{equation}\label{plane}
 F = \mathbb{Z}^2\times \{0\}^{d-2} \subseteq \mathbb{Z}^d .
 \end{equation}
 
For any $y \in F$ and $L \geq 1$ let us define the frame $\Box_y^L \subseteq F$ by
\begin{equation}\label{def_eq_frame_1}
\Box_y^L \stackrel{ \eqref{sphere} }{=} S(y, L -1) \cap F. 
\end{equation}

The next lemma gives an explicit upper bound on the capacity of a frame.
The bounds of \eqref{capa_frame_bound} are actually sharp up to a dimension-dependent constant factor, but we will only 
use the upper bounds.
The stronger bound for $d=3$ is  crucial to showing that 
random walks tend to avoid frames in $\mathbb{Z}^3$. The extra $\ln(L_0)$ makes the
 parameter $p$ defined in \eqref{def_of_p_geo_paramater} small, which is necessary for
 our proof of $u_*(3)>0$.
 Recall the notion of $c_g$ from \eqref{green_bounds}.

\begin{lemma}\label{lemma_frame_capacity}
For any $L \geq 1$ we have
\begin{equation}\label{capa_frame_bound}
\mathrm{cap} \left( \Box_y^L \right) \leq
\begin{cases} 
8\frac{L}{c_g} & \text{ if } \qquad d \geq 4, \\
8\frac{L}{c_g \cdot (1+\ln(L))} & \text{ if } \qquad d = 3.
\end{cases}
\end{equation}
\end{lemma}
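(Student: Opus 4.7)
The plan is to apply the second inequality in \eqref{bound_on_capacity}, which reduces the task to finding a uniform lower bound on $\sum_{y' \in \Box_y^L} g(x, y')$ as $x$ ranges over $\Box_y^L$; this is combined with the elementary cardinality bound $|\Box_y^L| \leq 8L$ coming from the fact that the frame is the boundary of a square of side $2L-1$ inside the coordinate plane $F$.

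Plugging in the Green function lower bound from \eqref{green_bounds}, for any $x \in \Box_y^L$ one has
\begin{equation*}
\sum_{y' \in \Box_y^L} g(x, y') \geq c_g \sum_{y' \in \Box_y^L} (|x-y'| \vee 1)^{2-d}.
\end{equation*}
The diagonal term $y' = x$ alone contributes $c_g$ to the right hand side, which already gives $\mathrm{cap}(\Box_y^L) \leq 8L/c_g$ via \eqref{bound_on_capacity}; this settles the case $d \geq 4$.

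For the sharper logarithmic bound when $d = 3$, I additionally retain the contributions of the remaining points of $\Box_y^L$ lying on the same side $s$ of the square frame as $x$. These points are collinear with $x$, and their $\ell^\infty$-distances to $x$ split into two arithmetic progressions $1, 2, \ldots, k_1$ and $1, 2, \ldots, k_2$ with $k_1 + k_2 = 2(L-1)$. Since $\max(k_1, k_2) \geq L-1$, the same-side contribution is at least $c_g H_{L-1}$, where $H_m = \sum_{r=1}^m 1/r$. Using the elementary estimate $H_{L-1} \geq \ln L$ for $L \geq 2$ (the case $L=1$ is trivial) and adding the diagonal term yields $\sum_{y'} g(x,y') \geq c_g(1 + \ln L)$ uniformly in $x \in \Box_y^L$; combined with $|\Box_y^L| \leq 8L$ and \eqref{bound_on_capacity} this finishes the proof.

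There is no real obstacle: the only mildly delicate ingredient is ensuring the lower bound on the same-side contribution holds uniformly in the position of $x$ along $s$ (corner points versus interior-of-side points), which reduces to the elementary observation that $\max(k_1,k_2) \geq L-1$ whenever $k_1 + k_2 = 2(L-1)$.
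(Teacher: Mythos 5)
Your proof is correct and, at its core, uses the same idea as the paper's: bound $\sum_{y'} g(x,y')$ from below by the diagonal term (which suffices for $d \geq 4$) plus a harmonic sum along a line through $x$ (which gives the logarithmic improvement for $d = 3$), then conclude via \eqref{bound_on_capacity}. The difference is in the decomposition. The paper first bounds the capacity of a single stick $\mathcal{S}_\ell = \{1,\dots,\ell\}\times\{0\}^{d-1}$ via \eqref{bound_on_capacity} --- on a stick the distances from any point to the others are automatically two arithmetic progressions with no geometric case analysis --- and then passes to the frame by writing $\Box_y^L$ as a union of four sticks of length $2L-1$ and invoking subadditivity \eqref{subadditive} and monotonicity \eqref{capa_monotone}. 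You instead apply \eqref{bound_on_capacity} directly to the frame $\Box_y^L$; this saves the appeal to \eqref{subadditive} and \eqref{capa_monotone}, at the price of having to track the frame geometry (corner versus interior-of-side points, the split $k_1+k_2 = 2(L-1)$ and the observation $\max(k_1,k_2)\geq L-1$), which you handle correctly. Both routes land on $8L/c_g$ and $8L/(c_g(1+\ln L))$; the paper's stick-first route is a bit more modular and avoids the positional case analysis, while yours is marginally more self-contained in the list of capacity facts it invokes.
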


\begin{proof}
Denote by 
$\mathcal{S}_\ell=\{ 1,\dots, \ell\} \times \{0\}^{d-1} \subseteq \mathbb{Z}^d$
 the stick of length $\ell$. We will use  \eqref{bound_on_capacity} to bound $\mathrm{cap}(\mathcal{S}_\ell)$.
 If $x \in \mathcal{S}_\ell$ then $x=\{i\} \times \{0\}^{d-1}$ for some $1\leq i \leq \ell$ and
\begin{multline*}
\sum_{y \in \mathcal{S}_\ell}{g(x,y)} 
\stackrel{ \eqref{green_bounds} }{\geq}
 \sum_{j=1}^{\ell} c_g \cdot (|j-i| \vee 1)^{2-d} \geq 
 \sum_{j=1}^{\ell} c_g \cdot (|j-1| \vee 1)^{2-d}=\\
c_g \cdot \left( 1+ \sum_{k=1}^{\ell-1} k^{2-d} \right) \geq
\begin{cases}
c_g & \text{ if } \quad d \geq 4,\\
c_g \cdot \left(1+ \int_1^{\ell} \frac{1}{s} \, \mathrm{d} s \right) 
= c_g\cdot (1+ \ln(\ell)) & \text{ if } \quad d =3.
\end{cases}
\end{multline*}
Using these bounds,  \eqref{bound_on_capacity} and $|\mathcal{S}_\ell|=\ell$
 we obtain that $\mathrm{cap} \left( \mathcal{S}_\ell \right) \leq \ell /c_g$ if $d \geq 4$ and
 $\mathrm{cap} \left( \mathcal{S}_\ell \right) \leq \ell /(c_g \cdot (1+\ln(\ell)) )$ if $d =3$.
 Now the frame $\Box_y^L$ is the union of four sticks of length $2L-1$, thus 
 \eqref{capa_frame_bound} follows from the above bounds and \eqref{subadditive}, \eqref{capa_monotone}.
 
\end{proof}

\subsection{Constructive definition of random interlacements}

The definition of the interlacement $\mathcal{I}^u$ at level $u$ by the formula \eqref{def_eq:Iu_capa} is short, but it is not constructive. 
The construction of \cite[Section 1]{SznitmanAM} involves a Poisson point process with intensity measure $u \cdot \nu,$ 
where  $\nu$ is a sigma-finite measure on the space of equivalence classes of
doubly infinite trajectories modulo time-shift.
 The union  of the ranges of trajectories which are contained in the support of this Poisson point process is denoted by
$\mathcal I^u$, and this random subset of $\mathbb{Z}^d$ indeed satisfies \eqref{def_eq:Iu_capa}.

\medskip 
We will not use the full definition of random interlacements, only a corollary of it, which 
allows one to construct a set with the same law as $\mathcal{I}^u \cap K$ for any $K \subset \subset \mathbb{Z}^d$.

Recall the notion of $P_m$ from \eqref{rw_with_initial_dist_m}, $\{X\}$ from \eqref{def_eq_range} and 
$\widetilde{e}_K(\cdot)$ from \eqref{def_eq_normalized_eq_measure}.

\begin{claim}\label{claim:constructive_interlacement}
Let $d\geq 3$, $K \subset \subset \mathbb{Z}^d$, $N_K$ be a Poisson random variable with parameter $u\cdot \mathrm{cap}(K)$, and 
$(X^j)_{j\geq 1}$ i.i.d.\ simple random walks with distribution $P_{\widetilde e_K}$ and independent from $N_K$. 
Then  $K \cap \cup_{j=1}^{N_K}\{X^j\} $
has the same distribution as $\mathcal I^u\cap K$. 
\end{claim}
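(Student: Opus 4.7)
The plan is to verify that the random subset $W := K \cap \bigcup_{j=1}^{N_K}\{X^j\}$ satisfies the characterizing identity \eqref{def_eq:Iu_capa}, applied to every $A\subseteq K$. Since both $W$ and $\mathcal{I}^u\cap K$ are random subsets of the finite set $K$, inclusion-exclusion gives that their laws coincide once we show
\begin{equation*}
\mathbb{P}[\,W\cap A=\emptyset\,]=e^{-u\cdot \mathrm{cap}(A)}\qquad\text{for every }A\subseteq K.
\end{equation*}
Because $A\subseteq K$, the event $\{W\cap A=\emptyset\}$ equals $\{A\cap\{X^j\}=\emptyset\text{ for all }1\le j\le N_K\}$, so conditioning on $N_K$ and using that the $X^j$ are i.i.d.\ with initial distribution $\widetilde e_K$ reduces the task to
\begin{equation*}
\mathbb{P}[\,W\cap A=\emptyset\,]=\sum_{n\ge 0}e^{-u\,\mathrm{cap}(K)}\frac{(u\,\mathrm{cap}(K))^n}{n!}\bigl(1-P_{\widetilde e_K}[\{X\}\cap A\ne\emptyset]\bigr)^{n}.
\end{equation*}

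The only computation left is the one-trajectory hitting probability. First I would use \eqref{green_equilib_entrance_identity} to write
\begin{equation*}
P_{\widetilde e_K}[\{X\}\cap A\ne\emptyset]=\sum_{x\in K}\widetilde e_K(x)\sum_{y\in A}g(x,y)\,e_A(y)=\sum_{y\in A}e_A(y)\cdot\frac{1}{\mathrm{cap}(K)}\sum_{x\in K}e_K(x)g(x,y).
\end{equation*}
Then, using the symmetry $g(x,y)=g(y,x)$ and applying \eqref{green_equilib_entrance_identity} a second time, the inner sum equals $P_y[\{X\}\cap K\ne\emptyset]$, which is $1$ for every $y\in A\subseteq K$ because the walk starts at $y\in K$ so $X_0\in K$. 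Therefore
\begin{equation*}
P_{\widetilde e_K}[\{X\}\cap A\ne\emptyset]=\frac{\mathrm{cap}(A)}{\mathrm{cap}(K)}.
\end{equation*}

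Plugging this into the Poisson sum above collapses the exponential generating function into $\exp\bigl(u\,\mathrm{cap}(K)\cdot(1-\mathrm{cap}(A)/\mathrm{cap}(K))-u\,\mathrm{cap}(K)\bigr)=e^{-u\,\mathrm{cap}(A)}$, which is exactly the identity we needed. The only mildly delicate step is the observation that $P_y[\{X\}\cap K\ne\emptyset]=1$ for $y\in K$, which is what makes the ratio $\mathrm{cap}(A)/\mathrm{cap}(K)$ appear cleanly; everything else is Poissonization and the symmetry of $g$. I expect no real obstacle here, as the claim is essentially a reformulation of \eqref{def_eq:Iu_capa} once one has the constructive Poisson point process description of $\mathcal{I}^u$ from \cite{SznitmanAM}.
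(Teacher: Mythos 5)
Your proof is correct, and it takes a genuinely different route from the paper's. The paper does not verify the claim from first principles; it simply points to Sznitman's original construction of the intensity measure $\nu$ by patching together the measures $Q_K$ (citing \cite[Theorem 1.1]{SznitmanAM}), so that the restriction of the interlacement point process to trajectories meeting $K$ is, by construction, a Poisson cloud of $N_K\sim\mathrm{Poisson}(u\,\mathrm{cap}(K))$ walks started from $\widetilde e_K$. Your argument instead treats the proposed random set $W = K\cap\bigcup_{j\le N_K}\{X^j\}$ as a candidate and checks directly that its avoidance function $A\mapsto\mathbb P[W\cap A=\emptyset]$, $A\subseteq K$, matches the one prescribed by the characterization \eqref{def_eq:Iu_capa}; the key computational input is the potential-theoretic identity $\sum_{x\in K}e_K(x)g(x,y)=1$ for $y\in K$, obtained from \eqref{green_equilib_entrance_identity} and the symmetry of $g$, which makes the one-walk hitting probability collapse to $\mathrm{cap}(A)/\mathrm{cap}(K)$ and the Poisson generating function to $e^{-u\,\mathrm{cap}(A)}$. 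Your approach buys a short, self-contained verification that avoids opening up the doubly-infinite-trajectory formalism, provided one takes \eqref{def_eq:Iu_capa} as the defining property; the paper's citation-based route is less elementary but explains structurally \emph{why} such a local Poissonian picture exists, rather than merely confirming that it has the right law.
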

This explicit ``local representation" of $\mathcal{I}^u$
follows from the very construction of  the sigma-finite measure
 $\nu$, which is obtained by patching
together certain explicit measures $Q_K$, $K \subset \subset \mathbb{Z}^d$ in a consistent manner in \cite[Theorem 1.1]{SznitmanAM}.
The above representation of $\mathcal I^u\cap K$  is obtained from 
the Poisson point process with intensity measure $u Q_K$.

\section{Renormalization}\label{section:renormalization}

For $n\geq 0$, let $T_{(n)}=\{1,2\}^n$ (in particular, $T_{(0)}=\emptyset$). Denote by  
\[ T_n =\bigcup_{k=0}^n T_{(k)} \]
the dyadic tree of depth $n$. 
For $0 \leq k < n$ and $m \in T_{(k)}$, $m=(\xi_1,\dots,\xi_k)$,  we denote
by 
\begin{equation}\label{def_eq_m1_m2}
 m_1=(\xi_1,\dots,\xi_k,1) \qquad \text{and} \qquad m_2=(\xi_1,\dots,\xi_k,2) 
 \end{equation}
the two children of $m$ in $T_{(k+1)}$. 
Given some $L_0\geq 1$ we define the sequence of scales
\begin{equation} \label{def:scalesLn}
L_n := L_0 \cdot 6^n , \quad n \ge 0.
\end{equation}
For $n \ge 0,$ we denote by $\mathcal L_n = L_n \mathbb{Z}^d$ the lattice $\mathbb{Z}^d$ renormalized by $L_n$.

\begin{definition}\label{def_proper_embedding_of_trees}
  $\mathcal{T}: T_n \to \mathbb{Z}^d$ is a proper embedding of  $T_n$  with 
root at $x \in \mathcal{L}_n$  if

\begin{enumerate}
 \item $\mathcal{T}(\emptyset)=x$;
\item for all $0 \leq k \leq n$ and $m \in T_{(k)}$ we have $\mathcal{T}(m) \in \mathcal{L}_{n-k}$;
\item for all $0 \leq k < n$ and $m \in T_{(k)}$ we have
\begin{equation}\label{tree_children_spread_out}
|\mathcal{T}(m_1) -\mathcal{T}(m)|= L_{n-k}, \qquad |\mathcal{T}(m_2) -\mathcal{T}(m)|= 2 L_{n-k}.
\end{equation}
\end{enumerate}
We denote by $\Lambda_{n,x}$ the set of proper embeddings of  $T_n$ into $\mathbb{Z}^d$ with root at $x$.
\end{definition}

\begin{lemma}\label{lemma_cardinality_of_embeddings}
For any $L_0 \geq 1$,  $n \geq 0$ and  $x \in \mathcal{L}_n$ the number of  proper embeddings of  $T_n$ into $\mathbb{Z}^d$ with 
root at $x$ is equal to
\begin{equation}\label{lambda_n_x_cardinality}
|\Lambda_{n,x}|\stackrel{\eqref{def_eq_C_d}}{=}\mathcal{C}_d^{2^{n}-1} .
\end{equation}
\end{lemma}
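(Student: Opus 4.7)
The plan is a direct counting argument exploiting the fact that the constraints on the embedding are local: for each non-leaf vertex $m$ of $T_n$, the positions of its two children $m_1, m_2$ can be chosen independently given the position of $m$, and the number of valid choices depends only on $d$ (not on $m$ or on the embedding so far).

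First I would identify the set of non-leaf vertices of $T_n$: these are $\bigcup_{k=0}^{n-1} T_{(k)}$, which has cardinality $1 + 2 + \cdots + 2^{n-1} = 2^n - 1$. Since $\mathcal{T}(\emptyset)=x$ is forced, the counting reduces to showing that for every non-leaf $m \in T_{(k)}$ and every already-fixed $\mathcal{T}(m) \in \mathcal{L}_{n-k}$, the number of pairs $(\mathcal{T}(m_1),\mathcal{T}(m_2)) \in \mathcal{L}_{n-k-1} \times \mathcal{L}_{n-k-1}$ satisfying \eqref{tree_children_spread_out} is exactly $\mathcal{C}_d=(13^d-11^d)(25^d-23^d)$. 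Formalizing this by induction on $n$ (or equivalently by a product over the levels $k=0,\dots,n-1$) will give the claim.

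For the key local count, I would rescale by $L_{n-k-1}$ and use $L_{n-k}=6L_{n-k-1}$. Dividing all positions by $L_{n-k-1}$, the condition $\mathcal{T}(m_1) \in \mathcal{L}_{n-k-1}$ becomes ``integer lattice point'', the condition $\mathcal{T}(m) \in \mathcal{L}_{n-k}$ becomes ``point of $6\mathbb{Z}^d$'', and $|\mathcal{T}(m_1)-\mathcal{T}(m)|=L_{n-k}$ becomes $\ell^\infty$-distance exactly $6$ from a fixed point of $\mathbb{Z}^d$. The number of such points is $|S(0,6)|=13^d-11^d$ by a standard box-minus-box calculation (recall \eqref{sphere}). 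The same reduction for $m_2$ with distance $2L_{n-k}=12L_{n-k-1}$ gives $|S(0,12)|=25^d-23^d$ choices, and the two counts are manifestly independent. Multiplication yields $\mathcal{C}_d$ pairs per non-leaf vertex.

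There is no real obstacle here; the only thing to be a bit careful about is verifying that the divisibility conditions are automatically consistent — that is, that every element of $S(0,6) \subseteq \mathbb{Z}^d$ genuinely lands, after translating back by $\mathcal{T}(m)$ and rescaling, in $\mathcal{L}_{n-k-1}$ (which is immediate, because $\mathcal{L}_{n-k}=6\mathcal{L}_{n-k-1} \subseteq \mathcal{L}_{n-k-1}$ and $L_{n-k-1}\mathbb{Z}^d = \mathcal{L}_{n-k-1}$). Once this is noted, the product over the $2^n-1$ non-leaf vertices produces exactly $\mathcal{C}_d^{2^n-1}$, proving \eqref{lambda_n_x_cardinality}.
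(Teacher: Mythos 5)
Your proposal is correct and takes essentially the same approach as the paper: both reduce to the observation that, after rescaling, each non-leaf vertex contributes exactly $|S(0,6)|\cdot|S(0,12)|=\mathcal{C}_d$ independent choices for its two children, then multiply over the $2^n-1$ non-leaf vertices. The paper organizes this as an induction on $n$ splitting at the root; you phrase it directly as a product over levels, which is just the unwound form of the same recursion.
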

\begin{proof} The claim is trivially true for $n=0$.
If $n \geq 1$, $x \in \mathcal{L}_n$ and $\mathcal{T} \in \Lambda_{n,x}$, we denote by $\mathcal{T}_1$ and $\mathcal{T}_2$ the
 two embeddings of $T_{n-1}$ which arise 
from $\mathcal{T}$ as the embeddings of the descendants of the two children of the root, i.e., 
for any $0 \leq k \leq n-1$ and $m=(\xi_1,\dots,\xi_k) \in T_{(k)}$ let
$\mathcal{T}_\xi(m)=\mathcal{T}(\xi,\xi_1, \xi_2, \ldots, \xi_k)$ for $\xi \in \{1,2\}$. 
By Definition \ref{def_proper_embedding_of_trees} we have $\mathcal{T}_\xi \in \Lambda_{n-1,\mathcal{T}(\xi)}$ for $\xi \in \{1,2\}$, thus we obtain \eqref{lambda_n_x_cardinality} by induction on $n$:
\begin{multline*}\label{eq_bound_on_number_of_trees}
|\Lambda_{n,x}| \stackrel{\eqref{tree_children_spread_out}}{=}
 | S(x, L_n) \cap \mathcal{L}_{n-1}| \cdot | S(x, 2 L_n) \cap \mathcal{L}_{n-1}|\cdot 
|\Lambda_{n-1,\mathcal{T}(1)}|\cdot |\Lambda_{n-1,\mathcal{T}(2)}|\stackrel{\eqref{def:scalesLn}}{=}\\
| S(0, 6)| \cdot | S(0, 12) |\cdot 
|\Lambda_{n-1,\mathcal{T}(1)}|\cdot |\Lambda_{n-1,\mathcal{T}(2)}|
  \stackrel{ (*) }{=} 
\mathcal{C}_d \cdot \mathcal{C}_d^{2^{n-1}-1} \cdot \mathcal{C}_d^{2^{n-1}-1} =\mathcal{C}_d^{2^{n}-1},
\end{multline*}
where in $(*)$ we used  the induction hypothesis.
\end{proof}

We say that $\gamma: \{0,\dots, l\} \to \mathbb{Z}^d$ is a $*$-connected path if 
$|\gamma(i)-\gamma(i-1)|=1$ for any $1 \leq i \leq l$. For such a path
 we denote by $ \{\gamma\}= \{\gamma(1), \dots, \gamma(l)\}$ the range of $\gamma$.
 
Recall the notion of  $S(x,R)$ from \eqref{sphere} and note that
$S(x,0)=\{x\}$.

\begin{lemma}\label{lemma_path_leaves_spheres_intersect}
If $\gamma$ is a $*$-connected path in $\mathbb{Z}^d$, $d\geq 2$  and $x \in \mathcal{L}_n$ such that
\begin{equation}\label{eq_annulus_crossing_path}
\{\gamma\} \cap S(x, L_n -1) \neq \emptyset  \quad \text{and} \quad
\{\gamma\} \cap S(x, 2L_n) \neq \emptyset
\end{equation}
then there exists $\mathcal{T} \in \Lambda_{n,x}$ such that 
\begin{equation}\label{leaves_spheres_path_intersect}
\{\gamma\} \cap S(\mathcal{T}(m), L_0 -1) \neq \emptyset \quad \text{for all} \quad m \in T_{(n)}.
\end{equation}
\end{lemma}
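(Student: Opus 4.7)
The plan is to prove the lemma by induction on $n$. The base case $n=0$ is trivial: $T_0 = \{\emptyset\}$ admits the unique proper embedding $\mathcal{T}(\emptyset) = x$, and since $L_n = L_0$, the required conclusion \eqref{leaves_spheres_path_intersect} coincides with the first part of the hypothesis \eqref{eq_annulus_crossing_path}.

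For the inductive step with $n \geq 1$, I will build the first two levels $\mathcal{T}(1), \mathcal{T}(2) \in \mathcal{L}_{n-1}$ explicitly, check that $\gamma$ satisfies the hypothesis of the lemma at level $n-1$ with respect to each of them, and then glue the resulting inductive embeddings. Since $\gamma$ is $*$-connected, consecutive points differ by at most one in $\ell^\infty$-norm, so the integer-valued map $i \mapsto |\gamma(i) - x|$ changes by at most one per step; together with \eqref{eq_annulus_crossing_path} this forces its image to contain every integer in $[L_n - 1, 2 L_n]$. I pick $y_0 \in \{\gamma\} \cap S(x, L_n - 1)$, $y_1 \in \{\gamma\} \cap S(x, L_n)$ and $y_2 \in \{\gamma\} \cap S(x, 2 L_n)$.

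To define $\mathcal{T}(\xi)$ for $\xi \in \{1,2\}$, fix an index $i$ with $|(y_\xi)_i - x_i| = \xi L_n$ (which is a multiple of $L_{n-1}$ since $L_n = 6 L_{n-1}$) and set $\mathcal{T}(\xi)_i = (y_\xi)_i$; for each other coordinate $j$, let $\mathcal{T}(\xi)_j$ be the nearest multiple of $L_{n-1}$ to $(y_\xi)_j$ lying in the interval $[x_j - \xi L_n, x_j + \xi L_n]$. Using $x \in \mathcal{L}_n \subseteq \mathcal{L}_{n-1}$, this gives $\mathcal{T}(\xi) \in \mathcal{L}_{n-1}$ with $|\mathcal{T}(\xi) - x| = \xi L_n$, and since any integer lying between two adjacent multiples of $L_{n-1}$ is at distance at most $\lfloor L_{n-1}/2 \rfloor \leq L_{n-1} - 1$ from one of them, $|\mathcal{T}(\xi) - y_\xi| \leq L_{n-1} - 1$. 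Thus $y_\xi$ lies in the closed ball of radius $L_{n-1} - 1$ around $\mathcal{T}(\xi)$, while the reverse triangle inequality shows that the opposite anchor — $y_2$ when $\xi = 1$, $y_0$ when $\xi = 2$ — sits at distance $\geq L_n = 6 L_{n-1} > 2 L_{n-1}$ from $\mathcal{T}(\xi)$. Because $\gamma$ visits points both inside $B(\mathcal{T}(\xi), L_{n-1} - 1)$ and outside $B(\mathcal{T}(\xi), 2 L_{n-1})$, $*$-connectedness once more forces $\{\gamma\}$ to intersect each of $S(\mathcal{T}(\xi), L_{n-1} - 1)$ and $S(\mathcal{T}(\xi), 2 L_{n-1})$.

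The inductive hypothesis applied to $\gamma$ at level $n-1$ with root $\mathcal{T}(\xi)$ produces $\mathcal{T}_\xi \in \Lambda_{n-1, \mathcal{T}(\xi)}$ with $\{\gamma\} \cap S(\mathcal{T}_\xi(m'), L_0 - 1) \neq \emptyset$ for every leaf $m' \in T_{(n-1)}$. Setting $\mathcal{T}(\emptyset) = x$ and $\mathcal{T}(\xi, \xi_1, \ldots, \xi_k) := \mathcal{T}_\xi(\xi_1, \ldots, \xi_k)$ for $\xi \in \{1,2\}$ and $(\xi_1, \ldots, \xi_k) \in T_{n-1}$, Definition \ref{def_proper_embedding_of_trees} is checked by inspection (the root condition, the level condition, and the distance condition \eqref{tree_children_spread_out} at $k=0$ all follow from the construction, and at deeper levels from $\mathcal{T}_\xi$), and \eqref{leaves_spheres_path_intersect} at level $n$ follows immediately. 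The only delicate point is the rounding construction: one has to simultaneously land $\mathcal{T}(\xi)$ on $\mathcal{L}_{n-1} \cap S(x, \xi L_n)$ and keep it within $\ell^\infty$-distance $L_{n-1} - 1$ of $y_\xi$, which is made possible by the fact that $L_n$ is a multiple of $L_{n-1}$ and by the trick of freezing the extremal coordinate of $y_\xi - x$ before rounding the remaining ones.
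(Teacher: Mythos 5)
Your proof is correct, and it takes a genuinely different route from the paper's. The paper fixes $n$ and constructs $\mathcal{T}$ top-down by induction on the tree level $k$, maintaining the strengthened invariant \eqref{eq_annulus_crossing_path_induction} that $\gamma$ crosses the annulus $S(\mathcal{T}(m), L_{n-k}-1)$-to-$S(\mathcal{T}(m), 2L_{n-k})$ at every vertex $m\in T_{(k)}$; the key geometric step there is the inclusion of the intermediate sphere $S(\mathcal{T}(m), L_{n-k}+L_{n-k-1}-1)$ into a union of small spheres $S(y, L_{n-k-1}-1)$ centred at points $y\in S(\mathcal{T}(m), L_{n-k})\cap\mathcal{L}_{n-k-1}$, which locates the children on the sphere directly. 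You instead recurse on $n$: pick the hitting points $y_1\in\{\gamma\}\cap S(x,L_n)$ and $y_2\in\{\gamma\}\cap S(x,2L_n)$, round each one explicitly to a lattice point $\mathcal{T}(\xi)\in\mathcal{L}_{n-1}\cap S(x,\xi L_n)$ within $\ell^\infty$-distance $L_{n-1}-1$ (freezing the extremal coordinate, rounding the rest), use the opposite anchor plus the discrete intermediate value argument to re-derive the two-sphere crossing around $\mathcal{T}(\xi)$, and glue the recursively-obtained sub-embeddings. Both proofs rest on the same two ingredients---discrete IVT for $*$-connected paths in $\ell^\infty$ and the commensurability $L_n = 6L_{n-1}$---but your divide-and-conquer formulation replaces the paper's unproved sphere-covering inclusion with a concrete coordinatewise rounding, and derives both required sphere intersections from a single IVT pass between $y_\xi$ and the opposite anchor. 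The tradeoff is that you must explicitly re-verify the hypothesis of the lemma at level $n-1$ for each child before recursing, whereas the paper carries this as a running invariant; neither version is meaningfully longer, and both are correct.
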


\begin{proof}
We will  prove that \eqref{eq_annulus_crossing_path} implies that 
there exists $\mathcal{T} \in \Lambda_{n,x}$ such that for all $0 \leq k \leq n$  we have
\begin{equation}\label{eq_annulus_crossing_path_induction}
\begin{array}{r}
\{\gamma\} \cap S(\mathcal{T}(m), L_{n-k} -1) \neq \emptyset \\ 
\{\gamma\} \cap S(\mathcal{T}(m), 2L_{n-k}) \neq \emptyset 
\end{array}
 \quad \text{for all} \quad m \in T_{(k)}.
\end{equation}
We will construct such a $\mathcal{T} \in \Lambda_{n,x}$ by induction on $k$. 
By $\mathcal{T}(\emptyset)=x$ we see that the case $k=0$ of \eqref{eq_annulus_crossing_path_induction}  is just \eqref{eq_annulus_crossing_path}.
Assuming that \eqref{eq_annulus_crossing_path_induction} holds for some $0 \leq k \leq n-1$
 we now show that it also holds for $k+1$. If $m \in T_{(k)}$ then our induction hypothesis 
 \eqref{eq_annulus_crossing_path_induction} and the fact that $\gamma$ is a $*$-connected path imply
\begin{equation*}
\begin{array}{r}
\{\gamma\} \cap S(\mathcal{T}(m), L_{n-k}+L_{n-k-1} -1) \neq \emptyset, \\
\{\gamma\} \cap S(\mathcal{T}(m), 2L_{n-k} -L_{n-k-1}+1 ) \neq \emptyset.
\end{array}
\end{equation*}
We also have
\begin{align*}
S(\mathcal{T}(m), L_{n-k}+L_{n-k-1} -1) &\subseteq 
\bigcup_{y \in S(\mathcal{T}(m), L_{n-k}) \cap \mathcal{L}_{n-k-1}} S(y, L_{n-k-1} -1),\\
S(\mathcal{T}(m), 2L_{n-k} -L_{n-k-1}+1 ) &\subseteq
\bigcup_{z \in S(\mathcal{T}(m), 2L_{n-k}) \cap \mathcal{L}_{n-k-1}} S(z, L_{n-k-1} -1),
\end{align*}
thus we can choose 
\[ \mathcal{T}(m_1) \in S(\mathcal{T}(m), L_{n-k}) \cap \mathcal{L}_{n-k-1} 
\quad \text{and} \quad
 \mathcal{T}(m_2)\in S(\mathcal{T}(m), 2L_{n-k}) \cap \mathcal{L}_{n-k-1}\]
  such that 
\begin{equation*}
\{\gamma\} \cap S(\mathcal{T}(m_1), L_{n-(k+1)} -1) \neq \emptyset, \qquad
\{\gamma\} \cap S(\mathcal{T}(m_2), L_{n-(k+1)} -1) \neq \emptyset.
\end{equation*}
It follows from this, $|\mathcal{T}(m_1)-\mathcal{T}(m_2)| \geq L_{n-k}= 6 L_{n-(k+1)}$  and the fact that $\gamma$ is a $*$-connected path that we also have
\begin{equation*}
\{\gamma\} \cap S(\mathcal{T}(m_1), 2L_{n-(k+1)}) \neq \emptyset, \qquad
\{\gamma\} \cap S(\mathcal{T}(m_2), 2L_{n-(k+1)}) \neq \emptyset.
\end{equation*}
We have thus constructed the embedding $\mathcal{T}$ up to depth $k+1$ so that 
Definition \ref{def_proper_embedding_of_trees} is satisfied up to depth $k+1$ and
 \eqref{eq_annulus_crossing_path_induction} also holds for $k+1$. 
 Therefore by induction we have constructed $\mathcal{T} \in \Lambda_{n,x}$ such that \eqref{eq_annulus_crossing_path_induction} holds for all $0 \leq k \leq n$, which implies
\eqref{leaves_spheres_path_intersect}. The proof of Lemma \ref{lemma_path_leaves_spheres_intersect} is complete.
\end{proof}

For $0 \leq k \leq n$ and $m=(\xi_1,\dots,\xi_n) \in T_{(n)}$ we denote  
$\left. m \right|_k = (\xi_1,\dots,\xi_k) \in T_{(k)}$.
Let us denote the lexicographic distance of $m, m' \in T_{(n)}$ by
\begin{equation*}%\label{def_eq_lex_dist}
\rho(m, m')= \min \{ k \geq 0 \; : \; \left. m \right|_{n-k}=\left. m' \right|_{n-k} \}.
\end{equation*}
For any $m \in T_{(n)}$ and $0 \leq k \leq n$ we define
\begin{equation}\label{def_eq_tree_sphere}
T_{(n)}^{m,k}=\{ m' \in T_{(n)} \; : \; \rho(m,m')=k \},
\end{equation}
see Figure \ref{fig:canopy} for an illustration.  Note that
\begin{equation}\label{cardinality_tree_sphere}
|T_{(n)}^{m,k}|=2^{k-1}, \qquad  1 \leq k \leq n.
\end{equation}

\begin{figure}[h]
\begin{center}
\large

\psfrag{M}{$m$}
\psfrag{T1}{$T_{(n)}^{m,1}$}
\psfrag{T2}{$T_{(n)}^{m,2}$}
\psfrag{T3}{$T_{(n)}^{m,3}$}
\psfrag{E}{$\emptyset$}
\psfrag{1}{$1$}
\psfrag{2}{$2$}
\psfrag{11}{$11$}
\psfrag{12}{$12$}
\psfrag{21}{$21$}
\psfrag{22}{$22$}

  \includegraphics[scale=1]{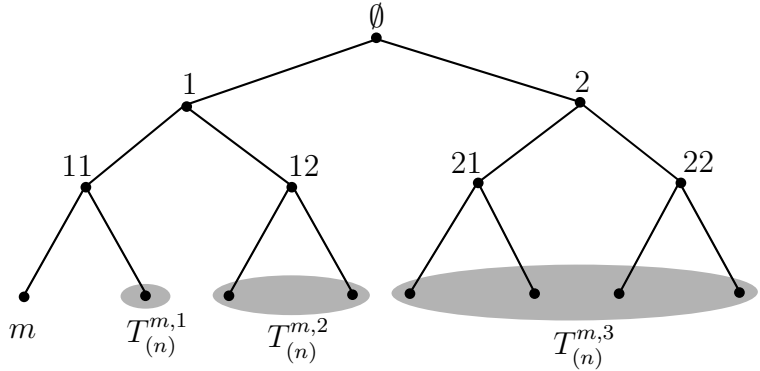}
\caption{An illustration of the subsets $T_{(n)}^{m,k}$ of leaves of $T_n$ defined in \eqref{def_eq_tree_sphere}.
The dyadic tree on the picture is of depth $n=3$ and the leaf denoted by $m$ is $111 \in T_{(n)}$.
}
\label{fig:canopy}
\end{center}
\end{figure}

The next lemma shows that a proper embedding  is ``spread-out on all scales."

\begin{lemma}\label{lemma_far_in_tree_far_in_embedding}

\begin{equation}\label{eq_far_in_tree_far_in_embedding}
\begin{array}{ccc}
\forall\;  n \geq 1,\; x \in \mathcal{L}_n,\;
 \mathcal{T} \in \Lambda_{n,x}, \; m \in T_{(n)}, \; k \geq 1, \; \\
\forall\; 
 m' \in T_{(n)}^{m,k}, \;
 y \in S(\mathcal{T}(m), L_0 -1), \; 
 z \in S(\mathcal{T}(m'), L_0 -1)
  : \\
 |y-z| \geq L_{k-1}.
 \end{array}
\end{equation}
\end{lemma}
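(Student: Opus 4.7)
Given $m, m' \in T_{(n)}$ with $\rho(m, m') = k \ge 1$, the natural object to introduce is their \emph{lowest common ancestor} $\tilde m := m|_{n-k} = m'|_{n-k} \in T_{(n-k)}$. By definition of $\rho$, the restrictions $m|_{n-k+1}$ and $m'|_{n-k+1}$ are two distinct children of $\tilde m$; WLOG call them $\tilde m_1$ and $\tilde m_2$ in the sense of \eqref{def_eq_m1_m2}. Property 3 of Definition \ref{def_proper_embedding_of_trees} places them at $\ell^\infty$-distances $L_k$ and $2L_k$ from $\mathcal{T}(\tilde m)$, so the reverse triangle inequality immediately yields
\begin{equation*}
|\mathcal{T}(\tilde m_1) - \mathcal{T}(\tilde m_2)| \ge 2 L_k - L_k = L_k.
\end{equation*}

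Next, I would bound the drift of $\mathcal{T}(m)$ from $\mathcal{T}(\tilde m_1)$ (and symmetrically $\mathcal{T}(m')$ from $\mathcal{T}(\tilde m_2)$) along the remaining $k-1$ levels of the tree by telescoping \eqref{tree_children_spread_out}: each step from depth $n-k+i$ to $n-k+i+1$ contributes at most $2 L_{k-i}$, so
\begin{equation*}
|\mathcal{T}(m) - \mathcal{T}(\tilde m_1)| \le \sum_{j=1}^{k-1} 2 L_j = \frac{2 L_0 (6^k - 6)}{5} = \frac{2}{5}\bigl(L_k - 6 L_0\bigr).
\end{equation*}
For $k=1$ the sum is empty and the bound is $0$, which is consistent with $m = \tilde m_1$. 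Combining with the previous inequality through the triangle inequality gives
\begin{equation*}
|\mathcal{T}(m) - \mathcal{T}(m')| \ge L_k - \tfrac{4}{5}\bigl(L_k - 6L_0\bigr) = \tfrac{1}{5} L_k + \tfrac{24}{5} L_0 = \tfrac{6}{5} L_{k-1} + \tfrac{24}{5} L_0,
\end{equation*}
where in the last step I used $L_k = 6 L_{k-1}$ from \eqref{def:scalesLn}.

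Finally, since $y$ and $z$ each lie within $\ell^\infty$-distance $L_0 - 1$ of the respective centers $\mathcal{T}(m)$ and $\mathcal{T}(m')$, one more triangle inequality gives
\begin{equation*}
|y - z| \ge \tfrac{6}{5} L_{k-1} + \tfrac{24}{5} L_0 - 2(L_0 - 1) = \tfrac{6}{5} L_{k-1} + \tfrac{14}{5} L_0 + 2 \ge L_{k-1},
\end{equation*}
which is exactly \eqref{eq_far_in_tree_far_in_embedding}. There is no real obstacle; the only point requiring care is the geometric-series estimate $\sum_{j=1}^{k-1} 6^j < \tfrac{1}{5} \cdot 6^k$, which is where the ratio $6$ in the definition \eqref{def:scalesLn} of the scales $L_n$ plays its role: it is large enough that the cumulative drift from $\tilde m_1$ down to the leaf $m$ cannot eat up the gap $L_k$ between $\mathcal{T}(\tilde m_1)$ and $\mathcal{T}(\tilde m_2)$, so the spread-out property propagates all the way to the leaves.
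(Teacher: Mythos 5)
Your proof is correct and follows essentially the same route as the paper: identify the lowest common ancestor $m''=m|_{n-k}$, use \eqref{tree_children_spread_out} at that level to separate $\mathcal{T}(m''_1)$ and $\mathcal{T}(m''_2)$ by $L_k$, telescope the remaining $k-1$ levels via the geometric series, and finish with the triangle inequality to pass from the leaf centers to $y$ and $z$. The only cosmetic difference is bookkeeping: the paper folds the $L_0-1$ term into the telescoping sum and bounds everything at once by $2L_{k-1}\sum_{i\ge 0}6^{-i}=\tfrac{12}{5}L_{k-1}$, whereas you compute the finite geometric sum exactly and subtract $2(L_0-1)$ at the end; both yield the same $\tfrac{6}{5}L_{k-1}$ margin.
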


\begin{proof}
Let $m''=\left. m \right|_{n-k}=\left. m' \right|_{n-k} \in T_{(n-k)}$.
% By \eqref{def_eq_lex_dist} and \eqref{def_eq_tree_sphere} we have $\left. m \right|_{n-k+1} \neq \left. m' \right|_{n-k+1}$, %thus 
 Recalling \eqref{def_eq_m1_m2} we may 
 assume w.l.o.g.\  that $\left. m \right|_{n-k+1}=m''_1 \in T_{(n-k+1)}$ and 
$\left. m' \right|_{n-k+1}=m''_2 \in T_{(n-k+1)}$.
We have 
\begin{equation*}%\label{sons_far}
|\mathcal{T}(m''_1) - \mathcal{T}(m''_2)| \stackrel{ \eqref{tree_children_spread_out} }{\geq}
 L_k \stackrel{\eqref{def:scalesLn} }{=}6 L_{k-1},
\end{equation*}
moreover
\begin{multline*}%\label{ancestors_close_1}
|\mathcal{T}(m''_1) - y| \leq |\mathcal{T}(m)-  y| +
\sum_{j=1}^{k-1} \left| \mathcal{T}(\left. m \right|_{n-j})-\mathcal{T}(\left. m \right|_{n-j+1}) \right| 
\stackrel{ \eqref{tree_children_spread_out} }{\leq} \\
L_0-1+ \sum_{j=1}^{k-1} 2 L_j 
\stackrel{ \eqref{def:scalesLn} }{\leq} 2 L_{k-1} \sum_{i=0}^{\infty} 6^{-i}=
\frac{12}{5} L_{k-1},
\end{multline*}
and similarly $|\mathcal{T}(m''_2) - z| \leq \frac{12}{5} L_{k-1}$.
Putting  these bounds  together we obtain \eqref{eq_far_in_tree_far_in_embedding}.
\end{proof}

\section{Upper bound on $u_*$}\label{section:upper}

Let us choose $L_0=1$ in \eqref{def:scalesLn}.
For $n \geq 1$  let us denote by $A^u_{n}$ the event 
\[ A^u_{n} = \left\{ \begin{array}{cc}
 \text{ there exists a nearest-neighbour path in $\mathcal{V}^u$ } \\
\text{ that connects $S(0, L_n-1)$ to $S(0,2 L_n)$ }
\end{array}
 \right\}. \]
Recall the definitions of  $C_g$ from \eqref{green_bounds} and $\mathcal{C}_d$ from \eqref{def_eq_C_d}.
\begin{proposition}\label{prop_subcrit}
For any $d \geq 3$ and 
\begin{equation}\label{def_eq_subcrit_u_upper_bound_prop}
 u> \frac{5}{2} C_g \ln(\mathcal{C}_d)
 \end{equation}
there exists $q=q(d,u) \in (0,1)$ such that
for any $n \geq 1$  we have 
\begin{equation}\label{annulus_cross_with_tiny_prob}
\mathbb{P}[ A^u_{n} ] \leq q^{2^n}.
\end{equation}
\end{proposition}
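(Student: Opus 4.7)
The plan is to execute the renormalization scheme sketched in the introduction: on the event $A^u_n$, extract from a vacant crossing path a large, well-separated set of vacant vertices indexed by the leaves of a dyadic tree; then lower-bound the capacity of this set, upper-bound the number of possible choices, and combine via \eqref{def_eq:Iu_capa}.

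First I would set up the extraction. Fix $L_0=1$, so $S(\mathcal{T}(m),L_0-1)=\{\mathcal{T}(m)\}$. If $A^u_n$ occurs, pick a nearest-neighbour (hence $*$-connected) vacant path $\gamma$ realising the crossing. Applying Lemma~\ref{lemma_path_leaves_spheres_intersect} with $x=0$ produces $\mathcal{T}\in\Lambda_{n,0}$ with $\mathcal{T}(m)\in\{\gamma\}\subseteq\mathcal{V}^u$ for every $m\in T_{(n)}$. Let $\mathcal{X}_{\mathcal{T}}=\{\mathcal{T}(m):m\in T_{(n)}\}$; Lemma~\ref{lemma_far_in_tree_far_in_embedding} gives $|\mathcal{T}(m)-\mathcal{T}(m')|\geq L_{k-1}=6^{k-1}\geq 1$ whenever $\rho(m,m')=k\geq 1$, so $|\mathcal{X}_{\mathcal{T}}|=2^n$, and moreover $A^u_n$ forces $\mathcal{I}^u\cap\mathcal{X}_{\mathcal{T}}=\emptyset$ for some $\mathcal{T}\in\Lambda_{n,0}$.

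Next I would lower-bound $\mathrm{cap}(\mathcal{X}_{\mathcal{T}})$ via \eqref{bound_on_capacity}. Fix $x=\mathcal{T}(m)\in\mathcal{X}_{\mathcal{T}}$. Split the sum over $y\in\mathcal{X}_{\mathcal{T}}$ according to the lexicographic distance $\rho(m,m')$; by \eqref{cardinality_tree_sphere} the $k$-th shell has $2^{k-1}$ elements and by Lemma~\ref{lemma_far_in_tree_far_in_embedding} combined with \eqref{green_bounds},
\begin{equation*}
\sum_{y\in\mathcal{X}_{\mathcal{T}}} g(x,y)\leq C_g+\sum_{k=1}^{n} 2^{k-1}\cdot C_g\cdot L_{k-1}^{2-d}
= C_g\Bigl(1+\sum_{k=1}^{n}\bigl(2\cdot 6^{-(d-2)}\bigr)^{k-1}\Bigr).
\end{equation*}
Since $d\geq 3$ gives $2\cdot 6^{-(d-2)}\leq 1/3$, the geometric series is bounded by $3/2$, so $\sum_{y}g(x,y)\leq \tfrac{5}{2}C_g$ uniformly in $x$. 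Therefore \eqref{bound_on_capacity} yields
\begin{equation*}
\mathrm{cap}(\mathcal{X}_{\mathcal{T}})\geq \frac{2^n}{(5/2)C_g}=\frac{2^{n+1}}{5C_g}.
\end{equation*}

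Finally I would union-bound. By Lemma~\ref{lemma_cardinality_of_embeddings} there are $\mathcal{C}_d^{2^n-1}$ candidate embeddings, and for each \eqref{def_eq:Iu_capa} gives $\mathbb{P}[\mathcal{I}^u\cap\mathcal{X}_{\mathcal{T}}=\emptyset]=e^{-u\,\mathrm{cap}(\mathcal{X}_{\mathcal{T}})}\leq e^{-u\cdot 2^{n+1}/(5C_g)}$, hence
\begin{equation*}
\mathbb{P}[A^u_n]\leq \mathcal{C}_d^{2^n-1}\exp\!\bigl(-u\,2^{n+1}/(5C_g)\bigr)
\leq \Bigl(\mathcal{C}_d\cdot e^{-2u/(5C_g)}\Bigr)^{2^n}.
\end{equation*}
Under \eqref{def_eq_subcrit_u_upper_bound_prop}, the base $q:=\mathcal{C}_d\,e^{-2u/(5C_g)}$ lies in $(0,1)$, proving \eqref{annulus_cross_with_tiny_prob}.

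The only non-routine step is the capacity lower bound: it is essential that the ``spread-out on all scales'' property of Definition~\ref{def_proper_embedding_of_trees} (as quantified by Lemma~\ref{lemma_far_in_tree_far_in_embedding}) makes the Green-function sum a convergent geometric series despite the $2^{k-1}$ growth of dyadic shells; this is precisely where the choice of base $6$ in \eqref{def:scalesLn} pays off, since it guarantees $2\cdot 6^{-(d-2)}<1$ down to the critical dimension $d=3$.
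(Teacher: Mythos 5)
Your proposal is correct and follows essentially the same route as the paper's proof: fix $L_0=1$, extract a proper embedding via Lemma~\ref{lemma_path_leaves_spheres_intersect}, lower-bound $\mathrm{cap}(\mathcal{X}_{\mathcal{T}})$ via \eqref{bound_on_capacity} using Lemma~\ref{lemma_far_in_tree_far_in_embedding} and the geometric series with ratio $2\cdot 6^{2-d}\leq 1/3$, and union-bound over the $\mathcal{C}_d^{2^n-1}$ embeddings using \eqref{def_eq:Iu_capa}. The only (minor) addition is that you explicitly verify $|\mathcal{X}_{\mathcal{T}}|=2^n$ from the separation estimate, a point the paper leaves implicit.
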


\begin{corollary} Proposition \ref{prop_subcrit} implies the upper bound of Theorem \ref{thm_bounds_on_u_star},
 as we now explain.
Let us denote by $\widetilde{A}^u_{n}$ the event that there exists a nearest-neighbour path in $\mathcal{V}^u$ that connects $S(0, L_n-1)$ to infinity and by 
$\widetilde{A}^u_{\infty}$ the event that $\mathcal{V}^u$ has an infinite connected component. 
If \eqref{def_eq_subcrit_u_upper_bound_prop} holds, then
\begin{equation*}
 \mathbb{P}[ \widetilde{A}^u_{\infty} ] \stackrel{(*)}{=}
 \lim_{n \to \infty}\mathbb{P}[ \widetilde{A}^u_{n} ] \leq 
  \lim_{n \to \infty}\mathbb{P}[ A^u_{n} ] \stackrel{ \eqref{annulus_cross_with_tiny_prob} }{=}0,
  \end{equation*}
  where $(*)$ holds by monotone convergence. Therefore we have $u_* \leq \frac{5}{2} C_g \ln(\mathcal{C}_d)$.
\end{corollary}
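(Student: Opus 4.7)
The plan is to turn the three-step display in the corollary's statement into three verifiable claims: (i) $\widetilde{A}^u_\infty = \bigcup_{n \geq 1} \widetilde{A}^u_n$ with the $\widetilde{A}^u_n$ increasing in $n$, so the first equality in the display follows from monotone convergence; (ii) $\widetilde{A}^u_n \subseteq A^u_n$, which gives the inequality in the middle; (iii) $\lim_n \mathbb{P}[A^u_n] = 0$, which is immediate from \eqref{annulus_cross_with_tiny_prob} since $q\in(0,1)$.

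For (i), the key observation is that a nearest-neighbour step changes the $\ell^\infty$-norm by at most $1$, so any nearest-neighbour path $\gamma$ in $\mathbb{Z}^d$ with $|\gamma(0)| = L_m - 1$ and $|\gamma(k)|\to\infty$ must cross $S(0,R)$ for every integer $R \geq L_m-1$. Starting a new path at the first time $\gamma$ hits $S(0,L_n-1)$ for $n\geq m$ shows $\widetilde{A}^u_m \subseteq \widetilde{A}^u_n$. For the identification with $\widetilde{A}^u_\infty$, if $\mathcal{V}^u$ has an infinite connected component $\mathcal{C}$ then, since $\mathcal{C}$ is an infinite locally finite connected graph, K\"onig's lemma supplies an infinite simple nearest-neighbour path in $\mathcal{C}$ starting at any chosen vertex $x\in\mathcal{C}$, and for every $n$ with $L_n-1\geq |x|$ the previous argument exhibits a witness of $\widetilde{A}^u_n$. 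The reverse inclusion $\bigcup_n \widetilde{A}^u_n \subseteq \widetilde{A}^u_\infty$ is obvious, since an infinite nearest-neighbour path in $\mathcal{V}^u$ is contained in some infinite component of $\mathcal{V}^u$. Thus $\widetilde{A}^u_\infty = \bigcup_n \widetilde{A}^u_n$ and monotone convergence applies.

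For (ii), I would again invoke the integer-distance argument: a nearest-neighbour path from $S(0,L_n-1)$ to infinity must pass through $S(0,2L_n)$, so truncating at the first such visit produces a finite $\mathcal{V}^u$-path connecting $S(0,L_n-1)$ to $S(0,2L_n)$, witnessing $A^u_n$. Combining (i)--(iii) yields $\mathbb{P}[\widetilde{A}^u_\infty] = 0$ for every $u > \frac{5}{2}C_g \ln(\mathcal{C}_d)$. Since $u_*$ is by definition the threshold above which $\mathcal{V}^u$ a.s.\ has no infinite connected component, this forces $u_* \leq u$ for every such $u$, and taking the infimum gives the desired bound $u_* \leq \frac{5}{2} C_g \ln(\mathcal{C}_d)$.

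There is no real obstacle: once Proposition \ref{prop_subcrit} is in hand, the argument is purely set-theoretic. The only non-trivial ingredient is the elementary observation that $\ell^\infty$-radii are traversed one unit at a time by a nearest-neighbour walk, together with the standard translation between ``infinite cluster exists'' and ``some infinite nearest-neighbour path exists,'' both of which are routine.
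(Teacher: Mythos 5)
Your proof is correct and takes essentially the same route as the paper: the paper's display encodes exactly your three claims (monotone convergence applied to the increasing events $\widetilde{A}^u_n$ whose union is $\widetilde{A}^u_\infty$, the inclusion $\widetilde{A}^u_n\subseteq A^u_n$, and the decay of $\mathbb{P}[A^u_n]$ from Proposition \ref{prop_subcrit}), with your write-up merely supplying the elementary topological details that the paper leaves implicit.
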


\begin{proof}[Proof of Proposition \ref{prop_subcrit}]
For any $n \geq 1$ and $\mathcal{T} \in \Lambda_{n,0}$ we denote
$ \mathcal{X}_{\mathcal{T}}= \bigcup_{m \in T_{(n)}} \mathcal{T}(m).$
Noting that $S(\mathcal{T}(m), L_0 -1)=S(\mathcal{T}(m), 0)=\{ \mathcal{T}(m)\}$ for any $m \in T_{(n)} $ and that 
every nearest-neighbour path is also a $*$-connected path we can apply Lemma \ref{lemma_path_leaves_spheres_intersect}  to infer 
 \begin{multline}\label{subcrit_tree_union_bound}
  \mathbb{P} \left[ A^u_{n} \right] 
  \stackrel{ \eqref{leaves_spheres_path_intersect} }{\leq}
\mathbb{P} \left[ \bigcup_{\mathcal{T} \in \Lambda_{n,0}} \{ \mathcal{X}_{\mathcal{T}} \subseteq \mathcal{V}^u \} \right] 
\stackrel{ \eqref{def_eq:Iu_capa}, \eqref{def:vsri} }{\leq} \\
 \sum_{\mathcal{T} \in \Lambda_{n,0}} \exp \left(-u \cdot \mathrm{cap}(\mathcal{X}_{\mathcal{T}}) \right) \stackrel{\eqref{lambda_n_x_cardinality}  }{\leq} 
\mathcal{C}_d^{2^n} \cdot \max_{\mathcal{T} \in \Lambda_{n,0}}  \exp \left(-u \cdot \mathrm{cap}(\mathcal{X}_{\mathcal{T}}) \right).
 \end{multline}
In order to finish the proof of Proposition \ref{prop_subcrit} we only need to show
 that for any $\mathcal{T} \in \Lambda_{n,0}$ we have
\begin{equation}\label{subcrit_capa_bound}
 \mathrm{cap}(\mathcal{X}_{\mathcal{T}}) \geq \frac{2}{5} \frac{1}{C_g}  2^n,
\end{equation}
because then  we indeed obtain
\[ \mathbb{P} \left[ A^u_{n} \right] 
\stackrel{ \eqref{subcrit_tree_union_bound}, \eqref{subcrit_capa_bound} }{\leq}
\mathcal{C}_d^{2^n} \exp \left(-u \frac{2}{5} \frac{1}{C_g}  2^n  \right)= 
\left(  \mathcal{C}_d \exp \left(-u \frac{2}{5} \frac{1}{C_g}   \right) \right)^{2^n} 
= q^{2^n}, \qquad q \stackrel{ \eqref{def_eq_subcrit_u_upper_bound_prop}}{<}1.
\]
 We will show \eqref{subcrit_capa_bound} using \eqref{bound_on_capacity}.
For any $\mathcal{T} \in \Lambda_{n,0}$ and any $m \in T_{(n)}$ we have
\begin{multline}\label{subcrit_upper_bound_on_green_sum}
 \sum_{m' \in  T_{(n)}} g(\mathcal{T}(m), \mathcal{T}(m')) \stackrel{ \eqref{def_eq_tree_sphere} }{=}
 \sum_{k=0}^{n}  \sum_{ m' \in T_{(n)}^{m,k}  } g(\mathcal{T}(m), \mathcal{T}(m')) 
 \stackrel{ \eqref{green_bounds}, \eqref{eq_far_in_tree_far_in_embedding} }{\leq} \\
 C_g +  \sum_{k=1}^{n} C_g L_{k-1}^{2-d} \left|  T_{(n)}^{m,k} \right| 
 \stackrel{ \eqref{def:scalesLn}, \eqref{cardinality_tree_sphere} }{ = } C_g \cdot \left( 1+ \sum_{k=1}^{n} 6^{(k-1)(2-d)} 2^{(k-1)}  \right)
 \stackrel{ d \geq 3 }{\leq} \\
  C_g \cdot \left( 1+ \sum_{k=1}^{\infty} 3^{1-k}    \right)
  = \frac{5}{2} C_{g}.
 \end{multline}
 Now \eqref{subcrit_capa_bound} follows from  \eqref{bound_on_capacity}, \eqref{subcrit_upper_bound_on_green_sum}
 and the fact that $ |\mathcal{X}_{\mathcal{T}}|=2^n$. The proof of  Proposition \ref{prop_subcrit} is complete.
\end{proof}

\section{Lower bound on $u_*$}\label{section:lower}

Let us choose $L_0$ according to \eqref{def_eq_L_0_supercrit} in \eqref{def:scalesLn}.
Recall the notion of the plane $F$ from \eqref{plane}.
For $n \geq 1$ and $x \in \mathcal{L}_n \cap F $  let us denote by $B^u_{n,x}$ the event 
\[ B^u_{n,x} = \left\{ 
\begin{array}{cc}
\text{ there exists a 
$*$-connected path in $\mathcal{I}^u \cap F$ } \\
\text{ that connects $S(x, L_n-1)$ to $S(x,2 L_n)$ }
\end{array}
\right\} .
\]

Recall the definitions of $c_g, C_g$ from \eqref{green_bounds} and $\mathcal{C}_d$ from \eqref{def_eq_C_d}.
\begin{proposition}\label{prop_supercrit}
For any $d \geq 3$ and 
\begin{equation}\label{def_eq_lower_bound_on_u_star}
u<  \frac{c_g}{L_0}  \frac{1}{\mathcal{C}_2} 2^{-(d+5)} ,
\end{equation}
 for any $n \geq 1$ and $x \in \mathcal{L}_n \cap F$ we have 
\begin{equation}\label{supercrit_no_star_path_bound}
\mathbb{P}[ B^u_{n,x} ] \leq \left( \frac{3}{4} \right)^{2^n}.
\end{equation}
\end{proposition}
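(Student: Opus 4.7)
My plan follows the roadmap sketched in the introduction: run the tree-embedding machinery of Section~\ref{section:renormalization} inside the plane $F\cong\mathbb Z^2$ and bound the resulting hitting probabilities via the Poissonian description of $\mathcal I^u$ from Claim~\ref{claim:constructive_interlacement}. On $B^u_{n,x}$ the witnessing $*$-connected path $\gamma\subseteq\mathcal I^u\cap F$ is in particular a $*$-connected path in $\mathbb Z^2$, so Lemma~\ref{lemma_path_leaves_spheres_intersect} (applied with ambient dimension~$2$) produces an embedding $\mathcal T$ of $T_n$ into $F$ such that $\gamma$ meets every frame $F_m:=\Box_{\mathcal T(m)}^{L_0}$, $m\in T_{(n)}$. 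The $\mathbb Z^2$-version of Lemma~\ref{lemma_cardinality_of_embeddings} gives $\mathcal C_2^{2^n-1}$ such embeddings, so a union bound reduces the proposition to showing $\max_{\mathcal T}\mathbb P[\forall m:F_m\cap\mathcal I^u\neq\emptyset]\le(3/(4\mathcal C_2))^{2^n}$.

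Fix $\mathcal T$ and put $K:=\bigcup_mF_m$. By Claim~\ref{claim:constructive_interlacement}, $\mathcal I^u\cap K$ has the law of $K\cap\bigcup_{j\le N_K}\{X^j\}$, where $N_K\sim\mathrm{Poisson}(\mu)$ with $\mu:=u\cdot\mathrm{cap}(K)$, independent of i.i.d.\ $P_{\widetilde e_K}$-walks $X^j$. Setting $Z^j:=|\{m:F_m\cap\{X^j\}\neq\emptyset\}|\ge 1$ and $M:=\sum_jZ^j$, the event that every frame is hit forces $M\ge 2^n$, so what I need is the compound-Poisson large-deviation bound $\mathbb P[M\ge 2^n]\le(3/(4\mathcal C_2))^{2^n}$.

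To control the tail of $Z^j$, I introduce
\[ p:=\max_m\max_{y\in F_m}P_y\bigl[\{X\}\cap\textstyle\bigcup_{m'\ne m}F_{m'}\neq\emptyset\bigr]. \]
Iterating the strong Markov property at successive ``first visits to a new frame'' yields $P_y[Z\ge k]\le p^{k-1}$ for any $y\in K$, so $Z$ is stochastically dominated by a $\mathrm{Geom}(1-p)$ variable. Combining \eqref{green_equilib_entrance_identity}, \eqref{green_bounds}, Lemma~\ref{lemma_far_in_tree_far_in_embedding} (providing $|y-z|\ge L_{k-1}$ for $y,z$ in frames at lexicographic distance $k$) and Lemma~\ref{lemma_frame_capacity} gives
\[ p\le\tfrac{8C_g}{c_g}L_0^{3-d}\,h(L_0,d)\sum_{k=1}^\infty(2\cdot 6^{2-d})^{k-1}, \]
with $h(L_0,3)=1/(1+\ln L_0)$ and $h(L_0,d)=1$ for $d\ge 4$; the geometric sum converges for every $d\ge 3$, and the value of $L_0$ prescribed by \eqref{def_eq_L_0_supercrit} is chosen \emph{exactly} so that $p\le 1/(3\mathcal C_2)$.

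The geometric domination gives $\mathbb E[t^M]\le\exp(\mu(t-1)/(1-pt))$ for any $t>1$ with $pt<1$, and \eqref{subadditive}, \eqref{capa_frame_bound}, \eqref{def_eq_lower_bound_on_u_star} bound $\mu\le 2^n\cdot 8uL_0/c_g\le 2^n/(2^{d+2}\mathcal C_2)$. Picking $t=2\mathcal C_2$ makes $pt\le 2/3$ and $(t-1)/(1-pt)\le 6\mathcal C_2$; a Chernoff bound combined with the elementary $\exp(6\cdot 2^{-(d+2)})\le 3/2$ for $d\ge 3$ then yields $\mathbb P[M\ge 2^n]\le(3/(4\mathcal C_2))^{2^n}$, and multiplying by $|\Lambda_{n,x}|$ closes the proof. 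The delicate step is the bound on $p$: the combinatorial factor $\mathcal C_2^{2^n}$ must be beaten by the exponential tail of $M$, which forces \emph{both} $p$ and $\mu/2^n$ to be of order $1/\mathcal C_2$. Three dimensions is the worst case because the capacity of a stick in $\mathbb Z^3$ is reduced only by a logarithmic factor in $L_0$, which is precisely why \eqref{def_eq_L_0_supercrit} demands an exponentially large $L_0$ when $d=3$.
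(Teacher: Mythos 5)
Your proposal is correct and follows essentially the same route as the paper: reduce to hitting all frames of an embedded tree via Lemma~\ref{lemma_path_leaves_spheres_intersect}, union-bound over embeddings, use Claim~\ref{claim:constructive_interlacement} plus the escape-probability bound $p$ and a geometric stochastic domination of the number of frames a single walk visits, and finish with a Chernoff bound for the resulting compound Poisson variable. The only cosmetic differences are the choice of tilting parameter ($t=2\mathcal{C}_2$ rather than the paper's $z=1/(2p)$) and some slack in the constants (the paper's $L_0$ in fact yields $p\le 1/(4\mathcal{C}_2)$, which makes your weaker claim $p\le 1/(3\mathcal{C}_2)$ automatic), neither of which changes the structure of the argument.
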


\begin{corollary} Proposition \ref{prop_supercrit} implies the lower bound of Theorem \ref{thm_bounds_on_u_star},
 as we now explain.
Let us denote by $\widehat{A}^u_{n}$ the event that there exists a nearest-neighbour path in $\mathcal{V}^u \cap F$ that connects $S(0, L_n)$ to infinity
 and by 
$\widehat{A}^u_{\infty}$ the event that $\mathcal{V}^u \cap F$ has an infinite connected component. 
By planar duality the event $(\widehat{A}^u_{n})^c$ is equal to the event that there exists a 
$*$-connected path in $\mathcal{I}^u \cap F$ that surrounds $S(0, L_n-1)$,
 thus if \eqref{def_eq_lower_bound_on_u_star} holds, then
\begin{equation*}
 \mathbb{P}[\widehat{A}^u_{n}] \geq
1- 
\mathbb{P}\left[\bigcup_{k=n}^{\infty} \; 
\bigcup_{x \in \mathcal{L}_k, \, |x| \leq 2L_{k+1} }  B^u_{k,x}   \right]
\stackrel{\eqref{def:scalesLn}, \eqref{supercrit_no_star_path_bound} }{\geq}
 1- \sum_{k=n}^{\infty} 25^d \cdot \left( \frac{3}{4} \right)^{2^k},
\end{equation*}
which in turn implies $\mathbb{P}[\widehat{A}^u_{\infty}] = \lim_{n \to \infty} \mathbb{P}[\widehat{A}^u_{n}]=1$.
 Therefore we have $u_* \geq \frac{c_g}{L_0}  \frac{1}{\mathcal{C}_2} 2^{-(d+5)}$.
\end{corollary}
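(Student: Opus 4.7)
The plan is to combine Proposition \ref{prop_supercrit} with planar site-percolation duality on $F$, augmented by a short geometric covering of surrounding $*$-paths by crossing events of the form $B^u_{k, x}$, and then use the trivial inclusion $\widehat{A}^u_n \subseteq \widehat{A}^u_\infty$ to conclude.

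First I would take as given the duality assertion stated in the corollary: the failure of $\widehat{A}^u_n$ is equivalent to the existence of a $*$-connected path $\gamma$ in $\mathcal{I}^u \cap F$ surrounding $S(0, L_n - 1)$, by the standard planar duality for site percolation on $F \cong \mathbb{Z}^2$. The main geometric claim is the covering $(\widehat{A}^u_n)^c \subseteq \bigcup_{k \geq n} \bigcup_{x \in \mathcal{L}_k \cap F,\, |x| \leq 2 L_{k+1}} B^u_{k, x}$. To extract $(k, x)$ from $\gamma$, I would let $y^\star \in \gamma$ minimize $|y^\star|$ (so $|y^\star| \geq L_n$), take $k \geq n$ to be the largest integer with $L_k \leq |y^\star|$, and let $x \in \mathcal{L}_k \cap F$ be the closest renormalized lattice site to $y^\star$. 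The bound $|y^\star - x| \leq L_k/2$ places $y^\star$ in the closed ball of radius $L_k - 1$ around $x$ (using $L_k \geq 2$), and the triangle inequality gives $|x| \leq |y^\star| + L_k/2 < L_{k+1} + L_k/2 \leq 2 L_{k+1}$. Since $\gamma$ surrounds the origin while $y^\star$ is its innermost point, the portion of $\gamma$ antipodal to $y^\star$ through the origin contributes a point $y^\sharp \in \gamma$ lying on the opposite side of the origin at distance at least $|y^\star|$; once $x$ is chosen adjacent to $y^\star$ in $\mathcal{L}_k \cap F$, this antipodal point lies on or outside the outer sphere $S(x, 2L_k)$. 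Combined with the $*$-connectedness of $\gamma$, this yields a sub-path of $\mathcal{I}^u \cap F$ connecting $S(x, L_k - 1)$ to $S(x, 2 L_k)$, i.e., the event $B^u_{k, x}$ holds.

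With the covering in hand, the count $|\mathcal{L}_k \cap F \cap \{|y| \leq 2 L_{k+1}\}| \leq 25^d$, the union bound, and Proposition \ref{prop_supercrit} give exactly the estimate $\mathbb{P}[\widehat{A}^u_n] \geq 1 - \sum_{k \geq n} 25^d \cdot (3/4)^{2^k}$ displayed in the corollary. The tail vanishes as $n \to \infty$, so $\mathbb{P}[\widehat{A}^u_n] \to 1$. Since $\widehat{A}^u_n \subseteq \widehat{A}^u_\infty$, this forces $\mathbb{P}[\widehat{A}^u_\infty] = 1$, showing that $\mathcal{V}^u$ percolates for every $u$ satisfying \eqref{def_eq_lower_bound_on_u_star}, whence $u_* \geq c_g (L_0 \mathcal{C}_2)^{-1} 2^{-(d+5)}$. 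I expect the planar topological step sketched above to be the main obstacle: one must verify carefully, with $\ell^\infty$ lattice rounding taken into account, that the chosen $(k, x)$ really does turn $\gamma$ into a $B^u_{k, x}$-crossing; this relies crucially on the multiplicative gap $L_{k+1} = 6 L_k$ between consecutive scales, which provides enough slack between $|x|$ and $2L_{k+1}$ for the lattice rounding to be absorbed.
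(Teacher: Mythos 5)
Your proposal is correct and follows essentially the same route as the paper: planar duality to reduce to a surrounding $*$-circuit in $\mathcal{I}^u\cap F$, a geometric covering of such a circuit by the crossing events $B^u_{k,x}$ with $k\geq n$ and $|x|\leq 2L_{k+1}$, a union bound using Proposition \ref{prop_supercrit}, and the inclusion $\widehat{A}^u_n\subseteq\widehat{A}^u_\infty$. The rounding detail you flag does close, but note the naive triangle inequality $|y^\sharp-x|\geq|y^\sharp-y^\star|-|y^\star-x|\geq 2r-L_k/2$ is too weak when $r$ is near $L_k$; one should instead use that $x$ (whose leading coordinate is a nonzero multiple of $L_k$, hence $\geq L_k$) lies on the opposite side of the origin from the antipodal point $y^\sharp$, so that coordinatewise $|y^\sharp-x|\geq L_k+r\geq 2L_k$.
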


\begin{proof}[Proof of Proposition \ref{prop_supercrit}]
We say that $\mathcal{T}: T_n \to F$ is a  proper embedding of the dyadic tree $T_n$  with 
root at $x \in \mathcal{L}_n \cap F$ into $F$ if $\mathcal{T} \in \Lambda_{n,x}$ (see Definition \ref{def_proper_embedding_of_trees}). We denote by $\Lambda^F_{n,x}$ the set of proper embeddings of  $T_n$ into $F$.

For any $y \in \mathcal{L}_0 \cap F$ let us define the frame $\Box_y \subseteq F$ by
\begin{equation*}%\label{def_eq_frame}
\Box_y \stackrel{\eqref{def_eq_frame_1} }{=} \Box_y^{L_0}= S(y, L_0 -1) \cap F. 
\end{equation*} 

For any $n \geq 1$, $x \in \mathcal{L}_n \cap F$ and $\mathcal{T} \in \Lambda^F_{n,x}$ let us denote by
\begin{equation}\label{def_eq_frame_union}
 \mathcal{X}^{\Box}_{\mathcal{T}}= \bigcup_{m \in T_{(n)}} \Box_{\mathcal{T}(m)}.
 \end{equation}
We start the proof of Proposition \ref{prop_supercrit} by an application 
of Lemma \ref{lemma_path_leaves_spheres_intersect} with $d=2$:
 \begin{multline}\label{supercrit_tree_union_bound}
  \mathbb{P} \left[ B^u_{n,x} \right] 
  \stackrel{ \eqref{leaves_spheres_path_intersect} }{\leq}
\mathbb{P} \left[ \bigcup_{\mathcal{T} \in \Lambda^F_{n,x}} \bigcap_{ m \in T_{(n)} } 
 \{ \Box_{\mathcal{T}(m)} \cap \mathcal{I}^u \neq \emptyset \} \right] 
\stackrel{(*)}{\leq} \\
\mathcal{C}_2^{2^n} \cdot \max_{\mathcal{T} \in \Lambda^F_{n,x}} 
\mathbb{P} \left[  \bigcap_{ m \in T_{(n)} } 
 \{ \Box_{\mathcal{T}(m)} \cap \mathcal{I}^u \neq \emptyset \} \right] ,
 \end{multline}
where in $(*)$ we used Lemma \ref{lemma_cardinality_of_embeddings} to infer
$ |\Lambda^F_{n,x}| \leq \mathcal{C}_2^{2^{n}}$.

In order to bound the probability on the right-hand side of \eqref{supercrit_tree_union_bound} let us fix
some $\mathcal{T} \in \Lambda^F_{n,x}$, recall the
constructive definition of random interlacements from Claim \ref{claim:constructive_interlacement} and
 denote the probability underlying the random objects (i.e., $N_K$ and $(X^j)_{j\geq 1}$)
   introduced in that claim by $\mathrm{P}$ when 
 $K=\mathcal{X}^{\Box}_{\mathcal{T}}$.  
 For a simple  random walk $X$ let us denote by 
 \begin{equation*}%\label{def_eq_number_of_frames_visited}
  \mathcal{N}(X) = \sum_{m \in T_{(n)}} \mathds{1} [ \{ X \} \cap \Box_{\mathcal{T}(m)} \neq \emptyset ] 
\end{equation*} 
the number of frames of form $\Box_{\mathcal{T}(m)}, \, m \in T_{(n)}$ that $X$ visits. We can bound
\begin{equation}\label{visits_every_frame_then_many_returns}
\mathbb{P} \left[  \bigcap_{ m \in T_{(n)} } 
 \{ \Box_{\mathcal{T}(m)} \cap \mathcal{I}^u \neq \emptyset \} \right] \leq
 \mathrm{P} \left[ \sum_{j=1}^{N_K} \mathcal{N}(X^j) \geq 2^n \right].
\end{equation}
Our next goal is to stochastically bound  $\mathcal{N}(X)$. 
Recall the definitions of $c_g, C_g$ from \eqref{green_bounds} and
$L_0$ from \eqref{def_eq_L_0_supercrit}.
 Let us define
\begin{equation}\label{def_of_p_geo_paramater} 
p =
\begin{cases} 
12 C_g/c_g \cdot L_0^{3-d}   & \text{ if } \quad d \geq 4, \\
12 C_g/c_g \cdot \frac{1}{1+ \ln(L_0)} & \text{ if } \quad d = 3.
\end{cases}
\end{equation}

For any $m \in T_{(n)}$, $y \in  \Box_{\mathcal{T}(m)}$ we have
\begin{multline}\label{walk_escapes_p}
 P_y[ \{X\} \cap  \mathcal{X}^{\Box}_{\mathcal{T}} \setminus \Box_{\mathcal{T}(m)} \neq \emptyset ]
\stackrel{  \eqref{def_eq_tree_sphere}, \eqref{def_eq_frame_union} }{\leq} 
\sum_{k=1}^n \sum_{ m' \in T_{(n)}^{m,k} } 
P_y[ \{X\} \cap \Box_{\mathcal{T}(m')} \neq \emptyset ] 
\stackrel{ \eqref{green_bounds}, \eqref{green_equilib_entrance_identity},  \eqref{eq_far_in_tree_far_in_embedding}  }{\leq} \\
\sum_{k=1}^n \sum_{ m' \in T_{(n)}^{m,k} }  C_g  L_{k-1}^{2-d} \mathrm{cap}(\Box_{\mathcal{T}(m')})
\stackrel{  \eqref{def:scalesLn}, \eqref{cardinality_tree_sphere}   }{=} 
\sum_{k=1}^n 2^{k-1} C_g L_0^{2-d} 6^{(k-1)(2-d)} \mathrm{cap}(\Box_{0})
\stackrel{ d \geq 3 }{\leq} \\
C_g L_0^{2-d} \mathrm{cap}(\Box_{0}) \sum_{k=1}^{\infty} 3^{1-k}   
\stackrel{\eqref{capa_frame_bound}, \eqref{def_of_p_geo_paramater}   }{\leq}
 p.
\end{multline}

The bound \eqref{walk_escapes_p} together with the 
 strong Markov property of simple random walk imply that 
$ P_{\widetilde e_K} [  \mathcal{N}(X) \geq k ] \leq p^{k-1}$ for any $k \geq 1$. In other words, $\mathcal{N}(X)$ is stochastically dominated
by a geometric random variable with parameter $1-p$, which implies 
$E_{\widetilde e_K} \left[z^{\mathcal{N}(X)} \right] \leq \frac{(1-p)z}{1-pz}$ for any $1 \leq z< \frac{1}{p}$. 
Recalling from Claim \ref{claim:constructive_interlacement} that $N_K$ is Poisson 
 with parameter $u \cdot \mathrm{cap}(K)= u \cdot \mathrm{cap}(\mathcal{X}^{\Box}_{\mathcal{T}})$, 
    for any $1 \leq z< \frac{1}{p}$ we obtain
 \begin{multline*}
  \mathrm{E} \left[z^{\sum_{j=1}^{N_K} \mathcal{N}(X^j)  } \right] =
 \exp \left( u \cdot \mathrm{cap}(\mathcal{X}^{\Box}_{\mathcal{T}})
 \left( E_{\widetilde e_K} \left[z^{\mathcal{N}(X)} \right]-1 \right) \right)
\leq \\
 \exp \left( u \cdot \mathrm{cap}(\mathcal{X}^{\Box}_{\mathcal{T}})
\left( \frac{z-1}{1-pz} \right) \right).
\end{multline*}
We can thus apply the exponential Chebyshev inequality with $z=\frac{1}{2p}$ to bound
\begin{multline*}
 \mathbb{P} \left[ B^u_{n,x} \right] 
 \stackrel{ \eqref{supercrit_tree_union_bound}, \eqref{visits_every_frame_then_many_returns}  }{\leq}
 \mathcal{C}_2^{2^n}  \mathrm{E} \left[\left(\frac{1}{2p}\right)^{\sum_{j=1}^{N_K} \mathcal{N}(X^j)  } \right](2p)^{2^n} 
 \leq \\
   \exp \left( u \cdot \mathrm{cap}(\mathcal{X}^{\Box}_{\mathcal{T}})
\left( \frac{\frac{1}{2p}-1}{1/2} \right) \right)(2p \mathcal{C}_2 )^{2^n} 
\stackrel{\eqref{subadditive} }{\leq} 
 \exp \left( u \cdot \frac{\mathrm{cap}(\Box_{0})}{p} \right)^{2^n}
 (2p \mathcal{C}_2 )^{2^n} 
 \stackrel{\eqref{def_eq_L_0_supercrit},\eqref{def_of_p_geo_paramater}  }{\leq} \\
  \exp \left( u \cdot \mathrm{cap}(\Box_{0}) 2^{d} \mathcal{C}_2 \right)^{2^n}
  2^{-2^n}
   \stackrel{ \eqref{capa_frame_bound} }{\leq}
   \exp \left( u  \frac{L_0}{c_g} 2^{d+3} \mathcal{C}_2 \right)^{2^n} 2^{-2^n}
   \stackrel{ \eqref{def_eq_lower_bound_on_u_star} }{\leq}
    \left( \frac{3}{4} \right)^{2^n} .
\end{multline*}
 This completes the proof of Proposition \ref{prop_supercrit}.
\end{proof}

\section*{Acknowledgements} 
The author thanks  Daniel Valesin and Qingsan Zhu 
for inspiring discussions and an anonymous referee for useful comments on the manuscript.
This paper was written while the author was a postdoctoral fellow of the University of British Columbia.
The work of the author is partially supported by OTKA (Hungarian National Research Fund) grant K100473, the Postdoctoral Fellowship of the Hungarian Academy of Sciences and the Bolyai Research Scholarship of the Hungarian Academy of Sciences.

\end{document}